\newcommand{\longhookrightarrow}{\lhook\joinrel\longrightarrow}
\newcommand{\Z}{\mathbb{Z}}
\newcommand{\Q}{\mathbb{Q}}
\newcommand{\C}{\mathbb{C}}
\newcommand{\W}{\mathbb{W}}
\renewcommand{\P}{\mathbb{P}}
\newcommand{\cK}{\mathcal{K}}
\newcommand{\cR}{\mathcal{R}}
\newcommand{\cO}{\mathcal{O}}
\newcommand{\cE}{\mathcal{E}}
\newcommand{\Fl}{\mathbf{Q}}
\newcommand{\Pl}{\mathbf{P}}
\newcommand{\Flrat}{\mathbf{Q}^{\mathsf{rat}}}
\newcommand{\Iw}{\mathbf{I}}
\newcommand{\tw}{\tilde{w}}
\newcommand{\tv}{\tilde{v}}
\newcommand{\sph}{\mathsf{sph}}
\newcommand{\heis}{\mathfrak{H}}
\newcommand{\Fun}{\mathfrak{F}_P}
\newcommand{\bFun}{\underline{\mathfrak{F}}_P}
\newcommand{\KFl}{K^{\Iw\rtimes\C^*}(\Fl)}
\newcommand{\KFlrat}{K^{\Iw\rtimes\C^*}(\Flrat)}
\newcommand{\Kfin}{\mathbf{K}}
\newcommand{\Ksph}{\Kfin^{\sph}}
\newcommand{\HH}{\mathbb{H}}
\newcommand{\af}{\mathrm{aff}}
\newcommand{\gch}{\mathrm{ch}}
\newcommand{\ch}{\mathrm{ch}}
\newcommand{\rat}{\mathsf{rat}}
\newcommand{\si}{\frac{\infty}{2}}
\newcommand{\la}{\lambda}
\newcommand{\al}{\alpha}
\newcommand{\be}{\beta}
\newcommand{\ga}{\gamma}
\newcommand{\Ga}{\Gamma}
\renewcommand{\Im}{\mathrm{Im}}
\newcommand{\Mat}{\mathrm{Mat}}
\newcommand{\lsi}{\prec}
\newcommand{\fund}{\omega}
\newcommand{\pol}{\varrho}
\newcommand{\comment}[1]{}
\newcommand{\hHH}{\widehat{\HH}}
\newcommand{\K}{\mathsf{k}}
\renewcommand{\k}{\K}
\newcommand{\ext}{\mathrm{ext}}
\newtheorem{thm}{Theorem}
\newtheorem{lem}[thm]{Lemma}
\newtheorem{prop}[thm]{Proposition}
\newtheorem{cor}[thm]{Corollary}
\theoremstyle{remark}
\newtheorem{rem}[thm]{Remark}
\newtheorem{ex}[thm]{Example}
\newtheorem{dfn}[thm]{Definition}
\numberwithin{thm}{section}
\numberwithin{equation}{section}
\begin{document}

\title[Equivariant $K$-theory of semi-infinite flags as nil-DAHA module]
{Equivariant $K$-theory of the semi-infinite flag manifold as a nil-DAHA module}
\author{Daniel Orr}
\address{
	Department of Mathematics (MC 0123),
	460 McBryde Hall, Virginia Tech,
	225 Stanger St.,
	Blacksburg, VA 24061 USA}
\email{dorr@vt.edu}
\date{\today}
\maketitle

% Questions
%
% Do we recover classical inverse Chevalley rule? -- look at Mathieu
% Non-simply-laced type? Example?

% Weekes
% non-simply-laced types - comment further?

% Finkelberg
% Conjecture 3.17

% Kato
% finiteness of antidom PC?
% interest in explicit formulas
% shape of coefficients, fully unpack Heisenberg action

% Koroteev
% continue to compare

\begin{abstract}
The equivariant $K$-theory of the semi-infinite flag manifold, as developed recently by Kato, Naito, and Sagaki, carries commuting actions of the nil-double affine Hecke algebra (nil-DAHA) and a $q$-Heisenberg algebra. The action of the latter generates a free submodule of rank $|W|$, where $W$ is the (finite) Weyl group. We show that this submodule is stable under the nil-DAHA, which enables one to express the nil-DAHA action in terms of $W\times W$ matrices over the $q$-Heisenberg algebra. Our main result gives an explicit algebraic construction of these matrices as a limit from the (non-nil) DAHA in simply-laced type. This construction reveals that multiplication by equivariant scalars, when expressed in terms of the Heisenberg algebra, is given by the nonsymmetric $q$-Toda system introduced by Cherednik and the author.
\end{abstract}

\tableofcontents

\section*{Introduction}
Let $G$ be a connected, simply-connected complex simple algebraic group, with Borel subgroup and maximal torus $B\supset T$. The semi-infinite flag manifold \cite{FF,FM} associated with $G$ is the homogeneous space $\Fl^\rat=G(\cK)/(T(\C)\cdot U(\cK))$ where $\cK=\C((z))$ and $U$ is the unipotent radical of $B$. This variant of the affine flag manifold captures the level-zero representation theory of the untwisted affine Lie algebra associated with $G$ \cite{BF2,K1,KNS,MRY}. Furthermore, the space of quasi-maps $\mathbb{P}^1\to G/B$ into the finite-dimensional flag variety admits a closed embedding into $\Fl^\rat$, and thus semi-infinite flag manifolds are intimately related to quantum $K$-theory of $G/B$~\cite{BF1}.

In \cite{KNS}, an equivariant $K$-group $\KFlrat$ is introduced, where $\Iw\subset G(\cR)$, $\cR=\C[[z]]$, is the Iwahori subgroup and $\C^\times$ acts by loop rotation. There is a major difficulty in applying the usual construction of equivariant algebraic $K$-theory---namely, the Grothendieck group of equivariant coherent sheaves---to $\Flrat$, as this space is an ind-infinite scheme which is not Noetherian. Thus $\KFlrat$ is not the Grothendieck group of a category of coherent sheaves, but is constructed to behave as if it were. We review its definition and basic properties in \S\ref{S:si}. One may object to working with this formal substitute for algebraic $K$-theory, but as a demonstration of its value we mention that $\KFlrat$ has already found applications in the quantum $K$-theory of $G/B$ \cite{K2,LNS}, and in particular to the $K$-theoretic version of Peterson's isomorphism \cite{LLMS}.

The $K$-group $\KFlrat$ has the structure of an $(\HH_0,\heis)$-bimodule, where $\HH_0$ is the nil-double affine Hecke algebra (nil-DAHA) and $\heis$ is a $q$-Heisenberg algebra. Acting on the subset of Schubert classes in $\KFlrat$ indexed by the Weyl group $W$ of $G$, the $q$-Heisenberg algebra $\heis$ generates a free submodule of finite rank $|W|$. We prove (Theorem~\ref{T:main}(1)) that this free $\heis$-module is stable under the nil-DAHA $\HH_0$, giving rise to a homomorphism $\varrho_0$ from $\HH_0$ to the algebra $\Mat_W(\heis)$ of $W\times W$ matrices over $\heis$.

\subsection{Algebraic construction}

Assuming that $G$ is simply-laced, our main result (Theorem~\ref{T:main}) gives a different construction of the homomorphism $\varrho_0 : \HH_0 \to \Mat_W(\heis)$ which is purely algebraic, starting from the polynomial representation of the double affine Hecke algebra. The nonsymmetric $q$-Whittaker function and its symmetries \cite{CO} are ultimately responsible for linking the geometry and with our algebra construction (see \eqref{E:nswhitt} in proof of Theorem~\ref{T:main}). In particular, our result shows that the action of $\HH_0$ on $\KFlrat$, expressed through $\varrho_0$, is by the {\em nonsymmetric} $q$-Toda operators of \cite{CO}.

We also define the ``spherical part'' of $\KFlrat$ (see \S\ref{SS:sph}), which should be regarded as the $(G(\cR)\rtimes \C^\times)$-equivariant $K$-theory of $\Flrat$. By taking the diagonal entry of $\varrho_0$ at the identity element of $W$, we obtain a homomorphism $\varrho_0^{\sph}: \Z[X]^W\to \heis$ corresponding to the spherical nil-affine Hecke algebra action on the spherical part of $\KFlrat$ (Corollary~\ref{C:sph}). Here $\Z[X]^W\subset \HH_0$ is a copy of the representation ring $R(G\times\C^\times)$ inside $\HH_0$. 

A consequence of our construction is that homomorphism $\varrho_0^{\sph}$ coincides with the $q$-Toda system of difference operators \cite{C2,CO}. In particular, our Corollary~\ref{C:sph} is closely related to results of Braverman and Finkelberg \cite{BF2}. The role of $q$-Toda systems in quantum $K$-theory and related geometries goes back to the influential works \cite{GL} and \cite{BF1}. More recently, \cite{Ko,KoZ} give geometric incarnations of type~$A$ $(q,t)$-Macdonald difference operators using the equivariant $K$-theory of quasimaps into the cotangent bundle of the flag variety.

As mentioned above, our results assume that $G$ is simply-laced. While we expect that certain adjustments can be made to extend them to general $G$, our method of proof of Theorem~\ref{T:main} applies only in the simply-laced case. (See the remarks at the beginning of Section~\ref{S:DAHA} for more about this.) Thus we do not attempt to formulate the most general result in this paper.

\subsection{Inverse Pieri-Chevalley formula}

For any $G$-weight $\la\in P$ one has in the usual way an equivariant line bundle $\cO(\la)$ on $\Flrat$. The Pieri-Chevalley formula in $\KFlrat$ expresses the action of multiplication by $\cO(\la)$ on Schubert classes $\{[\cO_{\tw}]\}_{\tw\in W_\af}$, where $W_\af$ is the affine Weyl group, as a combinatorial (in general, infinite) sum:
\begin{align}
[\cO(\la)]\cdot[\cO_{\tw}] &= \sum_{\tv\in W_\af} c_{\tw,\tv}^\la\cdot[\cO_{\tv}]
\end{align}
where $c_{\tw,\tv}^\la\in R(T\times\C^\times)$ are equivariant scalars. The first Pieri-Chevalley formula in $\KFlrat$ was given by \cite{KNS} for dominant $\la\in P_+$, as an infinite sum over semi-infinite Lakshmibai-Seshadri paths. A {\em finite} Pieri-Chevalley formula in $\KFlrat$ for antidominant $\la\in P_-$ was stated and proved in \cite{NOS}.\footnote{By an isomorphism from \cite{K2}, this implies the finiteness of multiplication in the small equivariant quantum $K$-theory of $G/B$. In this setting, the anti-dominant Pieri-Chevalley rule also has a close connection to the shift operator of \cite{IMT}.} A Pieri-Chevalley formula in $\KFlrat$ for arbitrary $\la\in P$, interpolating between the dominant and anti-dominant cases, was found recently by Lenart, Naito, and Sagaki \cite{LNS}.

Our main result directly pertains to the inverse Pieri-Chevalley formula, namely to the expansion:
\begin{align}\label{E:IPC}
e^\la\cdot[\cO_{\tw}] &= \sum_{\substack{\tv\in W_\af\\\mu\in P}} d_{\tw,\tv}^{\la,\mu}\cdot[\cO_{\tv}(\mu)]
\end{align}
where $d_{\tw,\tv}^{\la,\mu}\in R(\C^\times)=\Z[q^{\pm 1}]$ and $e^\la\in R(\Iw\rtimes\C^\times)$ is an equivariant scalar. Theorem~\ref{T:main} gives an algebraic construction of the inverse Pieri-Chevalley formula for arbitrary $\la\in P$, simply because the multiplication by equivariant scalars $e^\la$ is part of the nil-DAHA action. A consequence of Theorem~\ref{T:main} is the finiteness of the inverse Pieri-Chevalley formula in $\KFlrat$ for {\em arbitrary} $\la\in P$; in particular, an immediate coarse observation is that the right-hand side of \eqref{E:IPC} can be expressed as a sum over $\tilde{u}\in W_\af$ less than or equal to the translation element $y^\la$ in the usual Bruhat order (see \S \ref{SS:aff-weyl} for the notation used here).
% such that $\tv$ in \eqref{E:IPC} with $d_{\tw,\tv}^{\la,\mu}\neq 0$ must satisfy
% $\tv=u^{-1}w y^{\pm w_0w^{-1}(\beta)}$, sum of such $\pm$'s
%, in contrast to the Pieri-Chevalley formula. 
In future work, we plan to use our construction give a complete and explicit description of the inverse Chevalley rule in type $A$ and to derive implications for the structure of the coefficients $d_{\tw,\tv}^{\la,\mu}$ in general.

%by describing the action of $X^{\pm \fund_i}$ for any $i$.

% D_i are easy; it's really about the X-action

% K-theory of ordinary flags carries two Hecke actions, by left and right convolution. For semi-infinite flags, only one of these actions is available. A replacement for one (the right) is the commuting Heisenberg action, making it a bimodule. In fact, the span of Schubert classes in the K-group is a rank |W| free module over the Heisenberg.

\section*{Acknowledgements}

The author is grateful to Michael Finkelberg and Syu Kato for numerous discussions on semi-infinite flag manifolds. He also thanks Satoshi Naito and Daisuke Sagaki for related collaborations and Peter Haskell, Peter Koroteev, Cristian Lenart, Leonardo Mihalcea, Mark Shimozono, and Alex Weekes for helpful discussions. The author was supported by a Collaboration Grant for Mathematicians from the Simons Foundation.

\section{Notation}

\subsection{Root data}
Let $G\supset B\supset T$ be as in the introduction. Denote by $R,Q,P$ (respectively, $R^\vee,Q^\vee,P^\vee$) the (co)roots, (co)root lattice, and (co)weight lattice of $G$. For any root $\al\in P$ let $\al^\vee\in R^\vee$ be its associated coroot.

Let $R=R_+\sqcup R_-$ be the decomposition of $R$ into positive and negative roots determined by $B$. We write $\al>0$ (resp., $\al<0$) to indicate that $\al\in R_+$ (resp., $\al\in R_-$). Let $I$ be a Dynkin index set for $G$ and let $\{\al_i\}_{i\in I},\{\fund_i\}_{i\in I}$ (respectively, $\{\al_i^\vee\}_{i\in I},\{\fund_i^\vee\}_{i\in I}$) be the simple (co)roots and fundamental (co)weights, respectively. Let $W=\langle s_i : i\in I\rangle$ be the Weyl group of $G$, where $s_i=s_{\al_i}$ is the simple reflection through $\al_i$. Let $\ell(w)$ be the length of $w\in W$ with respect to $\{s_i\}_{i\in I}$ and let $w_0$ be the longest element of $W$.

Let $Q^\vee_+=\oplus_{i\in I}\Z_+\al_i^\vee$ and $P_+=\oplus_{i\in I}\Z_+\fund_i$ be the cones of effective coweights and dominant weights, respectively, where $\Z_+=\Z_{\ge 0}$. Let $\leq$ be the partial order on $Q^\vee$ given by $\al\le\be$ if and only if $\be-\al\in Q^\vee_+$. 
For $\la\in P_+$, let $V(\la)$ be the irreducible $G$-module with highest weight $\la$ and let $V(\la)_\mu\subset V(\la)$ be its $\mu$-weight space for any $\mu\in P$. 

We say that a statement depending on $\la\in P$ holds for sufficiently dominant $\la$ if there exists an $M\in\Z_+$ such that the statement is true whenever $\la=\sum_{i\in I}m_i\fund_i$ with $m_i\ge M$ for all $i\in I$.

Let $\Z[P]=R(T)$ be the group algebra of $P$, with basis elements $e^\mu\ (\mu\in P)$ such that $e^{\la+\mu}=e^\la e^\mu$ and $e^0=1$. For any finite-dimensional $T$-module $V$, define its character as $\ch\,V=\sum_{\mu\in P} m_\mu e^\mu\in\Z[P]$ where $m_\mu$ is the dimension of $\mu$-eigenspace of $T$ in $V$.

\subsection{Affine Weyl groups}\label{SS:aff-weyl}
Let $W_\af=W\ltimes Q^\vee$ and $W_\ext=W\ltimes P^\vee$ be the affine and extended affine Weyl groups.
%\quad W_\af'=W\ltimes Q\\
%\quad W_\ext'=W\ltimes P.
We denote elements $(w,\be)$ of these groups by $wy^\be$, i.e., $w=(w,0)$ and $y^\be=(e,\be)$ where $e\in W$ is the identity element.
The group $W_\af=\langle s_i : i\in I_\af\rangle$ is a Coxeter group where $I_\af=I\sqcup\{0\}$ and
\begin{align*}
s_0 = s_\theta y^{-\theta^\vee}%,\quad s_0'=s_\vartheta y^{-\vartheta},
\end{align*}
with $\theta$ the highest (long) root of $G$.

The group $\Pi=P^\vee/Q^\vee$ acts on $W_\af$ by diagram automorphisms. This can be realized as a subgroup of $W_\ext$ by the elements
\begin{align}
\pi_r = y^{\fund_r^\vee}u_r^{-1}%, \quad \pi_i = y^{\fund_i}u_i'^{-1}
\end{align}
where $r\in I$ is an index of a minuscule fundamental coweight  (i.e., $\al_r$ appears with coefficient $1$ in $\theta$) and $u_r$ is the shortest element of $W$ sending $\fund_r^\vee$ to the antidominant chamber.

Let $Q_\af=Q\oplus\Z\delta$ be the affine root lattice, which has basis $\{\al_i\}_{i\in I_\af}$ where $\al_0=-\theta+\delta$. Let $P_\af=P\oplus\Z\delta$ be the level-zero affine weight lattice. The affine Weyl group $W_\af$ acts on $P_\af$ as follows:
\begin{align}
wy^\be(\mu+k\delta) &= w(\mu)+(k-\langle\be,\mu\rangle)\delta
\end{align}
where $\langle\,,\,\rangle : Q^\vee\times P\to\Z$ is the canonical pairing.

The set of (untwisted) affine roots is $R_\af=\{\al+k\delta : \al\in R, k\in \Z\}$. We say that an affine root is semi-infinite positive, denoted $\al+k\delta \succ 0$ if $\al\in R_+$; otherwise, $\al+k\delta$ is semi-infinite negative, denoted $\al+k\delta\prec 0$. The reflection through an affine root $\al+k\delta$ is given by $s_{\al+k\delta}=s_\al y^{k\al^\vee}\in W_\af$.

The semi-infinite Bruhat order \cite{L} (see also \cite[\S 2.4 and \S A.3]{INS}) is the partial order $\prec$ on $W_\af$ generated by relations $s_{\al+k\delta}\tw\prec \tw$ if and only if $\tw^{-1}(\al+k\delta)\prec 0$. The resulting poset is graded by the length function $\ell_{\si}(wy^\be)=\ell(w)+\langle\be,2\rho \rangle$, where $2\rho=\sum_{\al\in R_+}\al$.

\subsection{Smash products}
Suppose $S$ is a commutative ring with $1$. For any $S$-algebra $S'$ and any group $\Gamma$ acting by $S$-algebra automorphisms on $S'$, we write $S'\rtimes \Ga$ for the smash product $S' \otimes_S S[\Gamma]$, which is an $S$-algebra with multiplication $(x_1\otimes\ga_1)(x_2\otimes\ga_2)=x_1(\ga_1\cdot x_2)\otimes \ga_1\ga_2$.

In the case when $\Ga$ is an abelian group, written additively, we use exponential notation $\{x^\ga\}_{\ga\in\Ga}$ for the standard basis elements of the group algebra $S[\Ga]$, so that $x^{\ga}x^{\ga'}=x^{\ga+\ga'}$ and $x^0=1$. As we will encounter several instances of such group algebras, and sometimes the same algebra will appear in different contexts, we will use various letters for the base of exponentials (e.g., $x$, $y$, $e$, $X$, $Y$)

\subsection{$q$-Heisenberg algebras}\label{SS:q-heis}
The following special case of smash products will arise frequently. Let $S=\Z[q^{\pm 1}]$
and suppose $A$ and $B$ are abelian groups, written additively, together with a bilinear form $A\times B\to \Z, (a,b)\mapsto\langle a,b\rangle$. Let $S'=S[A]$ with basis $\{x^a\}_{a\in A}$ and let $B$ act on $S'$ by $b\cdot x^a = q^{-\langle a,b\rangle} x^a$. Define $\heis_{A,B}$ to be the smash product $S[A]\rtimes B$. Let $\{y^b\}_{b\in B}$ be the standard $S$-basis of $S[B]$, so that $\heis_{A,B}$ has $S$-basis $\{x^ay^b\}_{a\in A,b\in B}$ where $x^a y^b = q^{\langle a,b \rangle}y^b x^a$.

%If we have a form $B\times A \to \Z, (b,a)\mapsto\langle b,a\rangle$ such that $\langle b,a\rangle\equiv\langle a,b \rangle$, then there is a (unique) $R$-linear algebra anti-isomorphism $\varsigma : \heis_{A,B} \to \heis_{B,A}$ such that $x^a y^b\mapsto x^b y^a$.

\subsection{Matrices}
Given a commutative ring $S$ with $1$ and any $S$-algebra $S'$ (not necessarily commutative), let $\Mat_W(S')$ denote the $S$-algebra of $W\times W$ matrices with entries in $S'$.

\section{Semi-infinite flag manifolds}\label{S:si}

In this section we recall the construction of the semi-infinite flag manifold $\Flrat$ due to \cite{FM} and the equivariant $K$-group $\KFlrat$ of \cite{KNS}. Our presentation follows \cite{KNS}, but we elaborate further on some points which our crucial for the present work. Near the end of this section we give a detailed example for $G=SL(2)$.

Consider a $(T\times\C^\times)$-module $V$ with the following properties: if $V=\oplus_{i\in\Z} V_i$ where $\C^\times$ acts in $V_i$ by $q^i$, then each $V_i$ is a finite-dimensional $T$-module and $V_i=0$ for $i$ sufficiently large (or small). For such $V$, we define $\ch\,V=\sum_{i\in\Z} q^i \ch\,V_i$ as an element of $\Z[P]((q^{-1}))$ (or $\Z[P]((q))$). We also define the graded dual $V^*=\oplus_{i\in I}V_{-i}^*$ for such $V$. 

Recall that $\cK=\C((z))$ and $\cR=\C[[z]]$. For any finite-dimensional complex vector space $V$, we write $V((z))=V\otimes_{\C}\cK$ and $V[[z]]=V\otimes_{\C}\cR$. Let $\P(V[[z]]) = (V[[z]]-0)/\C^\times$ regarded as an infinite-type projective scheme with homogeneous coordinate ring $S(V[z]^*)$. For $m\in\Z_{\ge 0}$, let $i_m$ be the embedding $\P(V[[z]])\hookrightarrow\P(V[[z]])$ induced by multiplication of $z^m$ on $V[[z]]$.

Let $\C^\times$ act on $\cK$ by loop rotation, i.e., $a\cdot p(z)=p(a^{-1}z)$ for $a\in \C^\times$ and $p(z)\in\cK$. Let $q\in R(\C^*)$ stand for the weight of $z$ under this action---namely, the class of the representation $q(a)=a^{-1}$.

Let $\Iw\subset G(\cR)$ be the Iwahori subgroup, which is the pre-image of $B$ under the evaluation map $G(\cR)\to G$ at $z=0$. Both $\Iw$ and $G(\cR)$ are $\C^\times$-stable.

%If $V$ is a $T$-module we have $\ch\,S(V[z]^*)\in R(T)[[q^{-1}]]$.
%Let $\gch\,V\in\Z[P][[q^{-1}]]$ be the character of a suitable $T\times\C^\times$-module.

\subsection{Semi-infinite flags}

Let $\Fl$ be the infinite-type scheme of \cite{FM} which parametrizes tuples $(\ell_\la)_{\la\in P_+}$ of $\C$-lines in $\prod_{\la\in P_+}\P(V(\la)[[z]])$ satisfying the Pl\"{u}cker equations. Such a collection is uniquely determined by the lines $(\ell_{\fund_i})_{i\in I}$, and the map $(\ell_\la)_{\la\in P_+}\mapsto (\ell_{\fund_i})_{i\in I}$ is the Drinfeld-Pl\"{u}cker embedding
\begin{align*}
\Fl \hookrightarrow \Pl:=\prod_{i\in I}\P(V(\fund_i)[[z]]).
\end{align*}
For any $\be\in Q^\vee_+$, the map $i_\be = \prod_{i\in I} i_{\langle\be,\fund_i\rangle}:\Pl\hookrightarrow\Pl$ restricts to a closed immersion $i_\be : \Fl\hookrightarrow\Fl$.

The {\em semi-infinite flag manifold} $\Flrat$ is the direct limit of the family $\{\Fl_\al\}_{\al\in Q^\vee_+}$ where $\Fl_\al\equiv\Fl$ with respect to the maps
$i_{\al\be} = i_{\be-\al}$ for all $\al\le\be$ in $Q^\vee_+$. Thus $\Flrat$ is an ind-infinite scheme. At the level of $\C$-points, we have $\Flrat=G(\cK)/(T(\C)\cdot U(\cK))$.

For any $\la=\sum_{i\in I}m_i\fund_i\in P$ we have a $(G(\cR)\rtimes \C^\times)$-equivariant (resp. $(G(\cK)\rtimes \C^\times)$-equivariant) line bundle $\cO(\la)$ on $\Fl$ (resp. $\Flrat$) given by the restriction of $\boxtimes_{i\in I}\cO(m_i)$ on $\Pl$ (resp. its limit). Here the $\C^\times$-action on all objects is induced by loop rotation on $\cK$.

%By Braverman-Finkelberg $H^i(\Fl,\cO(\la))$ is $0$ unless $i=0$ and $\la\in P_+$. These spaces are admissible $T\times\C^\times$ modules. 

\subsection{Equivariant $K$-theory of $\Fl$}

For $f=\sum_{k\ge 0} f_k q^{-k}\in\Z[P][[q^{-1}]]$ where $f_k=\sum_{\nu\in P}c_\nu e^\nu\in \Z[P]$, define $|f|=\sum_{k\ge 0}|f_k| q^{-k}$ where $|f_k|=\sum_{\nu\in P}|c_\nu|e^\nu$ and $|c_\nu|$ is the usual absolute value.

The $K$-group $\KFl$ is defined in \cite{KNS} as the $\Z[P][[q^{-1}]]$-module of formal infinite sums
$\sum_{\lambda\in P} f_\lambda[\cO(\lambda)]$ 
for $f_\lambda\in\Z[P][[q^{-1}]]$ satisfying the absolute convergence criterion
\begin{align}\label{E:abs-conv}
\sum_{\lambda\in P} |f_\lambda|\,\gch\,H^0(\Fl,\cO(\lambda+\mu)) \in\Z_+[P][[q^{-1}]]
\end{align}
for all $\mu\in P$,
modulo the $\Z[P][[q^{-1}]]$-submodule generated by $\sum_{\lambda\in P} f_\lambda\cdot[\cO(\lambda)]$ such that
\begin{align}\label{E:equiv}
\sum_{\lambda\in P} f_\lambda\,\gch\,H^0(\Fl,\cO(\lambda+\mu))=0
\end{align}
for sufficiently dominant $\mu\in P_+$.

Two basic features of $\KFl$ are:

\begin{itemize}

\item The product $[\cO(\nu)]\cdot\sum_{\la\in P}f_\la[\cO(\la)]=\sum_{\la\in P}f_\la[\cO(\la+\nu)]$ induced by tensor product of line bundles is well-defined on $\KFl$ for any $\nu\in P$.

\item For suitable quasicoherent sheaves $\cE$ on $\Fl$ (see \cite[Theorem 5.4]{KNS}), one has a well-defined class $[\cE]$, and the map $\cE\mapsto[\cE]$ is additive in short exact sequences.

\end{itemize}

%So $\KFl$ has a topological $\Z[[q^{-1}]]$-spanning set given by $\{e^\nu\cdot[\cO(\lambda)]\}$.

\subsection{Schubert classes}

For any $\tw\in W_\af$ we have a Schubert variety $\Fl(\tw)\subset\Fl^\rat$ equal to the closure of the $\Iw$-orbit through the $(T\times\C^\times)$-fixed point indexed by $\tw$. Our indexing of fixed points is determined as follows: if $\tw=wy^\be\in W_{\af,+}=W\ltimes Q^\vee_+$, then the corresponding fixed point (in $\Fl$) is the collection of lines $(z^{\langle\be,\la\rangle}V(\la)_{ww_0\la})_{\la\in P}$. We have $\Fl(e)=\Fl$ and $\Fl(\tw)\supset \Fl(\tv)$ (with codimension $\ell_\si(\tv)-\ell_\si(\tw)$) if and only if $\tw\preceq \tv$. Thus $\Fl(\tw)\subset \Fl$ if and only if $\tw\succeq e$ if and only if $\tw\in W_{\af,+}$.

For any $\tw\in W_\af$ and $\lambda\in P$, write $\cO_{\tw}=\cO_{\Fl(\tw)}$ and $\cO_{\tw}(\la)=\cO_{\Fl(\tw)}\otimes\cO(\lambda)$. For $\be\in Q^\vee_+$, we have
\begin{align}
(i_\be)_* \cO_{\tw}(\la) = q^{\langle \be,\la\rangle}\otimes\cO_{\tw y^\be}(\la)
\end{align}
as equivariant sheaves, for all $\tw\in W_\af$ and $\la\in P$, and correspondingly
\begin{align}\label{E:glob-sec-trans}
\ch\,H^0(\Fl(\tw y^\be),\cO(\la))=q^{-\langle\be,\la\rangle}\ch\,H^0(\Fl(\tw),\cO(\la)).
\end{align}

For any $\be\in Q^\vee_+$, the map $i_\be$ induces a homomorphism $(i_\be)_* : \KFl \to \KFl$ of $\Z[P][[q^{-1}]]$-modules such that $(i_\be)_*[\cO(\la)] = q^{\langle \be,\la \rangle}[\cO_{y^\be}(\la)]$ for all $\la\in P$. One easily checks that this map is: (i) well-defined, i.e., it respects convergence \eqref{E:abs-conv} and equivalence \eqref{E:equiv}, and (ii) injective. Moreover, one has $(i_\be)_*[\cO_{\tw}(\la)] = q^{\langle \be,\la \rangle}[\cO_{\tw y^\be}(\la)]$ for any $\tw\in W_{\af,+}$ and $\la\in P$.

The equivariant $K$-theory of $\Flrat$ is defined as 
$$\KFlrat=\Z[P]((q^{-1})) \otimes_{\Z[P][[q^{-1}]]} \varinjlim K_\al $$
where the direct limit of $(K_\al\equiv\KFl)_{\al\in Q^\vee_+}$ is taken with respect to $(i_{\al\be})_*=(i_{\be-\al})_*$ for $\al\le\be$ in $Q^\vee_+$ (just as in the definition of $\Flrat$). Thus in $\KFlrat$ one has
\begin{align}
[\cO_{\tw}(\la)]_{\al} &= q^{\langle\be,\la\rangle}[\cO_{\tw y^\be}(\la)]_{\al+\be}
\end{align}
where $[\cE]_\al$ stands for $[\cE]\in\KFl$ viewed as an element of $K_\al$.

One obtains classes $[\cO_{\tw}(\la)]\in\KFlrat$ for $\tw\in W_\af$ and $\la\in P$ which are well-defined by
\begin{align}\label{E:schub-in-lim}
[\cO_{\tw}(\la)] &= q^{\langle\la,\al\rangle}[\cO_{\tw y^\al}(\la)]_\al\in K_\al
\end{align}
for any $\al\in Q^\vee_+$ such that $\tw y^\al\in W_{\af,+}$.

\begin{dfn}
Define $\Kfin$ to be the $\Z[q^{\pm 1}]$-submodule of $\KFlrat$ generated by the classes $\{[\cO_{\tw}(\lambda)]\}_{\tw\in W_\af,\lambda\in P}$.
\end{dfn}

%By \cite[Cor. 4.30]{KNS}, $[\cO_{\Fl(x)}(\lambda)]\in\KFl$ for any $x\in W_{\af,+}$ and $\lambda\in P$; its expansion as a $\Z[P][[q^{-1}]]$-linear combination of Schubert classes is given by the Pieri-Chevalley formula of \cite[Theorem 5.10]{KNS} for dominant $\lambda\in P_+$ and that of \cite{NOS} for $\lambda\in-P_+$.

%By \cite[Prop. 5.8]{KNS}, the classes $\{[\cO_x]\}_{x\in W_{\af,+}}$ are topologically linearly independent over $\Z[P][[q^{-1}]]$; a sum of such converges if and only if the series of coefficients converges absolutely in $\Z[P][[q^{-1}]]$.

\begin{lem}\label{L:Kfin-basis}
The classes $\{[\cO_{\tw}(\lambda)]\}_{\tw\in W_\af,\lambda\in P}$ form a $\Z[q^{\pm 1}]$-basis of $\Kfin$.
\end{lem}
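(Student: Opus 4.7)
The spanning claim is immediate from the definition of $\Kfin$; what requires proof is $\Z[q^{\pm 1}]$-linear independence of the classes $[\cO_{\tw}(\la)]$ in $\KFlrat$. The plan is to reduce any hypothetical finite relation down to a single level $K_\al$ of the direct system defining $\KFlrat$, and then exploit the character-of-global-sections homomorphism built into the construction of $\KFl$. Concretely, given a finite relation $\sum_{(\tw,\la)\in S} c_{\tw,\la}(q)\,[\cO_{\tw}(\la)] = 0$ with $c_{\tw,\la}\in\Z[q^{\pm 1}]$, I would first pick $\al\in Q^\vee_+$ large enough that $\tw y^\al\in W_{\af,+}$ for every $(\tw,\la)\in S$. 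Via \eqref{E:schub-in-lim} and the injectivity of the transition maps $(i_\be)_*$, the relation descends to
\[\sum_{(\tw,\la)\in S} c_{\tw,\la}(q)\,q^{\langle\la,\al\rangle}\,[\cO_{\tw y^\al}(\la)] \;=\; 0 \quad\text{inside } K_\al\equiv\KFl.\]

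For each sufficiently dominant $\mu\in P_+$, the definition of $\KFl$ supplies a homomorphism $\chi_\mu:\KFl\to\Z[P]((q^{-1}))$ sending a representative $\sum_\nu f_\nu[\cO(\nu)]$ to $\sum_\nu f_\nu\,\gch\,H^0(\Fl,\cO(\nu+\mu))$, with joint kernel zero over all such $\mu$ by construction. Additivity in short exact sequences (\cite[Theorem~5.4]{KNS}) identifies $\chi_\mu([\cO_{\tv}(\la)])$ with $\gch\,H^0(\Fl(\tv),\cO(\la+\mu))$ for each $\tv\in W_{\af,+}$, so applying $\chi_\mu$ reduces the problem to showing $\Z[q^{\pm 1}]$-linear independence of the finitely many characters $\gch\,H^0(\Fl(\tv),\cO(\la+\mu))$ as functions of sufficiently dominant $\mu$.

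To separate these characters I would decompose each $\tv=vy^\ga$ with $v\in W$ and $\ga\in Q^\vee_+$, and invoke \eqref{E:glob-sec-trans} to pull out a factor $q^{-\langle\ga,\la+\mu\rangle}$. Letting $\mu$ vary along a dominant ray, the divergent $q$-exponents decouple contributions with distinct $\ga$. For each fixed $\ga$, only finitely many pairs $(v,\la)$ survive, and I would distinguish them through an extremal-weight monomial of $\gch\,H^0(\Fl(v),\cO(\la+\mu))$ corresponding to the $(T\times\C^\times)$-fixed point indexed by $v$: for sufficiently dominant $\mu$, this monomial is an explicit equivariant character depending injectively on the pair $(v,\la)$, precluding any nontrivial relation.

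The hard part is precisely this final extremal-weight step. Because $\Flrat$ is not of finite type, standard equivariant $K$-theoretic localization is unavailable on $\KFl$, so the distinguishing monomials cannot simply be isolated by formal restriction to fixed points; instead they must be extracted from a concrete character formula. The semi-infinite Lakshmibai--Seshadri path expression for $\gch\,H^0(\Fl(\tv),\cO(\la+\mu))$ developed in \cite{KNS} supplies exactly such a formula, and reducing linear independence to the combinatorial statement that its extremal path weights separate the pairs $(v,\la)$ would complete the argument.
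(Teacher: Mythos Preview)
Your approach is genuinely different from the paper's. The paper does not analyze characters of global sections directly; instead, after reducing to $\KFl$ as you do, it applies the Pieri--Chevalley formula \cite[Theorem~4]{KNS} to expand each $[\cO_{\tw}(\la)]$ triangularly in the Schubert classes $\{[\cO_{\tv}]\}_{\tv\succeq\tw}$, with leading coefficient $e^{\tw(\la)}$. Since the $[\cO_{\tv}]$ are already known to be topologically $\Z[P][[q^{-1}]]$-linearly independent by \cite[Proposition~5.8]{KNS}, choosing a $\prec$-minimal $\tw_1$ in the support yields $\sum_\la c_{\tw_1,\la}e^{\tw_1(\la)}=0$, hence $c_{\tw_1,\la}=0$, and induction finishes. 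The argument is short precisely because it leverages two results already proved in \cite{KNS}.

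Your route has a real gap at the decoupling step. The characters $\gch\,H^0(\Fl(v),\cO(\la+\mu))$ are infinite series in $q^{-1}$ and depend nontrivially on $\mu$, so sending $\mu$ along a dominant ray does not isolate a single $\ga$-block: the $q$-supports of terms with different $\ga$ overlap in all but finitely many degrees, and what ``survives'' in any limiting sense changes with $\mu$. At best you can extract the top $q$-coefficient for each fixed $\mu$, which gives one $\Z[P]$-equation mixing several $(v,\la)$'s, not the vanishing of the whole $\ga_0$-block. The natural fix is to skip the $\ga$-separation entirely and argue via a single extremal monomial of $\gch\,H^0(\Fl(\tw),\cO(\la+\mu))$ that encodes $(v,\ga,\la)$ all at once---but identifying that monomial and its coefficient is exactly the leading term of the Pieri--Chevalley formula, so you would end up using the same input as the paper, only less directly.
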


\begin{proof}
Suppose $\sum_{\tw,\la}c_{\tw,\la}[\cO_{\tw}(\la)]=0$ where $c_{\tw,\la}\in\Z[q^{\pm 1}]$ and the sum is finite. Without loss of generality we may assume that $\tw\in W_{\af,+}$, so that this equation holds in $\KFl$, and that $\la\in P_+$. This is achieved via \eqref{E:schub-in-lim} for sufficiently large $\al\in Q^\vee_+$ and then by tensoring with $[\cO(\nu)]$ sufficiently dominant $\nu\in P_+$. We may also assume that $c_{\tw,\la}\in\Z[q^{-1}]$ after multiplying by a power of $q^{-1}$.

We may then apply the Pieri-Chevalley formula \cite[Theorem~4]{KNS} in $\KFl$, which gives that $[\cO_{\tw}(\la)]=e^{\tw(\la)}[\cO_{\tw}]+\dotsm$, where $\dotsm$ is a convergent $\Z[P][[q^{-1}]]$-linear combination of $[\cO_{\tv}]$ for $\tv\succ \tw$. Moreover, the classes $\{[\cO_{\tw}]\}_{\tw\in W_{\af,+}}$ are topologically linearly independent over $\Z[P][[q^{-1}]]$ \cite[Proposition~5.8]{KNS}. Choosing $\tw_1$ to be a minimal element with respect to the semi-infinite Bruhat order and such that $c_{\tw_1,\la}\neq 0$ for some $\la$, we deduce that $\sum_\la c_{\tw_1,\la}e^{\tw_1(\la)}=0$, whence $c_{\tw_1,\la}=0$ for all $\la\in P$ (since we assume $\la\in P_+$). By induction, we establish the desired linear independence.
\end{proof}

%We can also consider the collection of classes $\{q^{\pm\langle\xi,\lambda\rangle}\cdot[\cO_{\Fl(x)}(\lambda)] : x=wt_\xi\in W_\af^{\ge 0},\lambda\in P^+\}$.
%We have
%\begin{align*}
%\gch\,H^0(\cO_{\Fl(wt_\xi)},\cO(\lambda+\mu)) &= 
%q^{\mp \langle\xi,\lambda+\mu\rangle}\gch\,H^0(\cO_{\Fl(w)},\cO(\lambda+\mu))\\
%q^{\pm\langle\xi,\lambda\rangle}\gch\,H^0(\cO_{\Fl(wt_\xi)},\cO(\lambda+\mu)) &= 
%q^{\mp \langle\xi,\mu\rangle}\gch\,H^0(\cO_{\Fl(w)},\cO(\lambda+\mu))
%\end{align*}
%implying that
%\begin{align}
%q^{\pm\langle\xi,\lambda\rangle}\cdot[\cO_{\Fl(x)}(\lambda)] &= [\cO
%\end{align}

\subsection{Functional realization}

Let $\Fun$ be the $\Z[P][[q^{-1}]]$-module of functions $\psi : P \to \Z[P][[q^{-1}]]$, with pointwise addition and scalar multiplication. Let $\bFun$ be its quotient by the $\Z[P][[q^{-1}]]$-submodule of functions vanishing on sufficiently dominant $\mu$.

For $\la\in P$ consider the function $\psi_\la\in \Fun$ given by
\begin{align}
\psi_\la(\mu) = \gch\,H^0(\Fl,\cO(\lambda+\mu)).
\end{align}
By \cite[Theorem 5.6]{KNS}, the assignment $\Psi([\cO(\la)])=\psi_\la$ extends to an embedding $\Psi: \KFl \hookrightarrow \bFun$ of $\Z[P][[q^{-1}]]$-modules.

Let $\Fun^\rat=\Z[P]((q^{-1}))\otimes_{\Z[P][[q^{-1}]]}\Fun$ and $\bFun^\rat=\Z[P]((q^{-1}))\otimes_{\Z[P][[q^{-1}]]}\bFun$. We regard elements of $\Fun^\rat$ as functions $\psi: P \to \Z[P]((q^{-1}))$.

For any $\al\in Q^\vee_+$ let $j_\al : \bFun \to \bFun^\rat$ be the map induced by $(j_\al\psi)(\la)=q^{\langle\al,\la\rangle}\psi(\la)$ from $\Fun$ to $\Fun^\rat$. Define $\Psi_\al=j_\al\circ\Psi$ for any $\al\in Q^\vee_+$. One computes that $\Psi_\al=\Psi_\be\circ(i_{\be-\al})_*$ whenever $\al\le\be$ in $Q^\vee_+$. Hence the maps $(\Psi_\al)_{\al\in Q^\vee_+}$ give rise to an embedding $\Psi^\rat:\KFlrat \hookrightarrow \bFun^\rat$. This map satisfies 
\begin{align}\label{E:Psi-extend}
\Psi^\rat([\cO_{\tw}(\la)])(\mu) = \ch\,H^0(\Fl(\tw),\cO(\la+\mu))
\end{align}
for all $\tw\in W_\af$ and $\la,\mu\in P$.

%Cite Mathieu here.

\subsection{Heisenberg}

Tensor product by line bundles $\cO(\la) \ (\la\in P)$ and pushforward under the maps $i_\beta \ (\beta\in Q^\vee_+)$ generate a Heisenberg action on $\KFlrat$.

Define the $q$-Heisenberg algebra $\heis=\heis_{P,Q^\vee}=\Z[q^{\pm 1}][P]\rtimes Q^\vee$ (see \S\ref{SS:q-heis}). The spaces $\Fun^\rat$ and $\bFun^\rat$ are right $\heis$-modules under
\begin{align}
(\psi\cdot x^\nu)(\mu) &= \psi(\mu+\nu)\\
(\psi\cdot y^\be)(\mu) &= q^{-\langle\beta,\mu\rangle}\psi(\mu)
\end{align}
where $\nu\in P$ and $\be\in Q^\vee$.

%Equivalently, $\heis$ is the $\Z[q^{\pm 1}]$-algebra with generators
%\begin{align}
% x^\la\ (\la\in P), \quad y^{\be}\ (\be\in Q^\vee)
%\end{align}
%and relations $q^0=p^0=1$ and
%\begin{align}
%y^\be y^\ga &= y^{\be+\ga}\\
%x^\la x^\mu &= x^{\la+\mu}\\
%y^\be x^\la &= q^{-\langle \be,\la\rangle} x^\la y^\be.
%\end{align}

\begin{prop}
$\KFlrat$ is a right $\heis$-module such that
\begin{align}
[\cO_{\tw}(\la)]\cdot x^\nu &= [\cO_{\tw}(\la+\nu)]\label{E:schub-heis-p}\\
[\cO_{\tw}(\la)]\cdot y^\be &= q^{\langle\beta,\la\rangle}[\cO_{\tw y^{\be}}(\la)]\label{E:schub-heis-q}
\end{align}
for all $\tw\in W_\af$, $\la\in P$ and $\nu\in P$, $\be\in Q^\vee$.
Moreover,
\begin{enumerate}
\item[(i)] $\Psi^\rat : \KFlrat \to \bFun^\rat$ is an $\heis$-module monomorphism, and
\item[(ii)] $\Kfin\subset\KFlrat$ is a free $\heis$-module with basis $\{[\cO_w]\}_{w\in W}$.
\end{enumerate}
\end{prop}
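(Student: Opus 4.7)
The plan is to construct the right $\heis$-action on $\KFlrat$ by transporting the already-defined $\heis$-action on $\bFun^\rat$ through the embedding $\Psi^\rat$; once this is done, item~(i) holds tautologically, and item~(ii) reduces to Lemma~\ref{L:Kfin-basis}. Concretely, I would first verify that the image of $\Psi^\rat$ is $\heis$-stable by checking the formulas \eqref{E:schub-heis-p}--\eqref{E:schub-heis-q} on Schubert generators through $\Psi^\rat$.

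For $x^\nu$, the stability is immediate from \eqref{E:Psi-extend}, since both $\Psi^\rat([\cO_\tw(\la)])\cdot x^\nu$ and $\Psi^\rat([\cO_\tw(\la+\nu)])$ evaluate to $\ch\,H^0(\Fl(\tw),\cO(\la+\nu+\mu))$ at $\mu\in P$. For $y^\be$ with $\be\in Q^\vee_+$, combining \eqref{E:Psi-extend} with the pushforward identity \eqref{E:glob-sec-trans} yields
\begin{align*}
\Psi^\rat\bigl(q^{\langle\be,\la\rangle}[\cO_{\tw y^\be}(\la)]\bigr)(\mu)
&= q^{\langle\be,\la\rangle}\ch\,H^0(\Fl(\tw y^\be),\cO(\la+\mu))\\
&= q^{-\langle\be,\mu\rangle}\ch\,H^0(\Fl(\tw),\cO(\la+\mu)),
\end{align*}
which matches $\bigl(\Psi^\rat([\cO_\tw(\la)])\cdot y^\be\bigr)(\mu)$. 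For general $\be\in Q^\vee$, I would write $\be=\be_+-\be_-$ with $\be_\pm\in Q^\vee_+$ and use \eqref{E:schub-in-lim} to realize both $[\cO_\tw(\la)]$ and $[\cO_{\tw y^\be}(\la)]$ inside a common $K_\al$ with $\tw y^{\be+\al}\in W_{\af,+}$, where the effective case applies. Since $\Psi^\rat$ is injective, the $\heis$-action descends to $\KFlrat$ with the stated formulas, and the Heisenberg commutation relations (already holding on $\bFun^\rat$) transfer automatically, giving (i).

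For (ii), combining \eqref{E:schub-heis-p} and \eqref{E:schub-heis-q} yields, for every $\tw=wy^\be\in W_\af$,
\begin{align*}
[\cO_{wy^\be}(\la)] = q^{-\langle\be,\la\rangle}\,[\cO_w]\cdot x^\la y^\be,
\end{align*}
so $\{[\cO_w]\}_{w\in W}$ spans $\Kfin$ as a right $\heis$-module. For linear independence, any relation $\sum_{w\in W}[\cO_w]\cdot a_w=0$ with $a_w=\sum_{\la,\be}c_{w,\la,\be}\,x^\la y^\be$ expands via the same formula into a finite $\Z[q^{\pm 1}]$-linear dependence among the pairwise distinct classes $\{[\cO_{wy^\be}(\la)]\}_{w\in W,\,\be\in Q^\vee,\,\la\in P}$, which must be trivial by Lemma~\ref{L:Kfin-basis}. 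Hence the basis.

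The main obstacle is the stability check for $y^\be$ when $\be\notin Q^\vee_+$: the pushforward identity \eqref{E:glob-sec-trans} is valid only for effective $\be$, so the bridge to arbitrary $\be\in Q^\vee$ is provided by the colimit relation \eqref{E:schub-in-lim}, which must be wielded carefully to produce a consistent operator on all of $\KFlrat$ rather than just on the spanning set of Schubert classes.
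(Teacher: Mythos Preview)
Your approach is essentially that of the paper: transport the $\heis$-action from $\bFun^\rat$ to $\KFlrat$ via the embedding $\Psi^\rat$, then read off the Schubert formulas, and invoke Lemma~\ref{L:Kfin-basis} for freeness. The one substantive divergence is \emph{where} you check stability of the image. You verify it on Schubert classes $[\cO_\tw(\la)]$, but these only span $\Kfin$, not all of $\KFlrat$---whose elements are convergent infinite $\Z[P][[q^{-1}]]$-combinations of line bundle classes---so the obstacle you flag at the end is real and not addressed by your colimit argument for $y^\be$. The paper sidesteps this by checking stability directly on the classes $[\cO(\la)]_\al$, which are the actual generators in the definition of $\KFl$, and then remarking that the computations extend to convergent infinite sums (this is where the basic features of $\KFl$, namely well-definedness of tensoring by $\cO(\nu)$ and of $(i_\be)_*$, enter). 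The Schubert formulas \eqref{E:schub-heis-p}--\eqref{E:schub-heis-q} are then deduced a~posteriori via \eqref{E:glob-sec-trans} and \eqref{E:Psi-extend}, exactly as in your computation. Your argument for (ii) matches the paper's.
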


\begin{proof}
%The existence of $\Z[q^{\pm 1}]$-linear endomorphisms of $\KFlrat$ satisfying \eqref{E:schub-heis-q} and \eqref{E:schub-heis-p} is immediate: the action of $q^\beta$ is given $(i_\be)_*$ (at least for $\be\in Q^\vee_{\ge 0}$), and that of $p^\la$ is given by the tensor product by $\cO_{Q^{\rat}}(\la)$ (how is that defined?). We need to check that the relations of $\heis$ are satisfied.

The image $\Psi(\KFlrat)\subset \bFun^\rat$ is stable under $\heis$, due to:
\begin{align*}
(\Psi^\rat([\cO(\la)]_\al)\cdot x^\nu)(\mu) &= \Psi_\al([\cO(\la)])(\mu+\nu)\\
&= q^{\langle \al,\mu+\nu\rangle}\Psi([\cO(\la)])(\mu+\nu)\\
&= q^{\langle \al,\mu+\nu\rangle}\Psi([\cO(\la+\nu)])(\mu)\\
&= \Psi_\al(q^{\langle \al,\nu\rangle}[\cO(\la+\nu)])(\mu)\\
&= \Psi^\rat(q^{\langle \al,\nu\rangle}[\cO(\la+\nu)]_\al)(\mu)\\
(\Psi^\rat([\cO(\la)]_{\al})\cdot y^\be)(\mu) &= q^{-\langle \be,\mu\rangle}\Psi^\rat([\cO(\la)]_\al)(\mu)\\
&= q^{-\langle \be,\mu\rangle}\Psi^\rat(q^{\langle \ga,\la\rangle}[\cO_{y^{\ga}}(\la)]_{\al+\ga})(\mu)\\
&=q^{-\langle \be,\mu\rangle}q^{\langle \ga,\la\rangle}q^{\langle\al+\ga,\mu\rangle}\Psi([\cO_{y^\ga}(\la)])(\mu)\\
&=\Psi^\rat(q^{\langle \ga,\la\rangle}[\cO_{y^{\ga}}(\la)]_{\al-\be+\ga})(\mu)
\end{align*}
where we choose $\ga\in Q^\vee_+$ so that $\al-\be+\ga\in Q^\vee_+$. These computations extend to convergent infinite sums. Hence $\KFlrat$ can be made uniquely into an $\heis$-module such that $\Psi$ is an $\heis$-homomorphism. 

To obtain \eqref{E:schub-heis-p} and \eqref{E:schub-heis-q}, we simply apply $\Psi$ to both sides and check that they agree using \eqref{E:glob-sec-trans} and \eqref{E:Psi-extend}.
%Using \eqref{E:schub-in-lim}, we find that
%\begin{align*}
%q^{-\langle\al,\nu\rangle}[\cO_{\tw y^{-\al}}(\nu)]\cdot x^\la &= q^{\langle \al,\la\rangle}q^{-\langle\al,\nu+\la\rangle}[\cO_{\tw y^{-\al}}(\nu+\la)]\\
%q^{-\langle\al,\la\rangle}[\cO_{\tw y^{-\al}}(\la)]\cdot y^\be &= q^{\langle \ga,\la\rangle}q^{-\langle\al-\be+\ga,\la\rangle}[\cO_{\tw y^{-\al+\be}}(\la)]
%\end{align*}
%resulting in 
Finally, the freeness assertion is Lemma~\ref{L:Kfin-basis}.
\end{proof}

%Write
%\begin{align*}
%[\cO_{\Fl(x)}(\mu)] = \sum_{\la\in P} c_\la [\cO_{\Fl}(\la)].
%\end{align*}
%Then
%\begin{align*}
%[\cO_{\Fl(x)}(\mu)]\cdot q^\be = \sum_{\la\in P} q^{\langle\be,\la\rangle}c_\la[\cO_{\Fl(t_\be)}(\la)]
%\end{align*}
%and we need to prove that this is equal to
%$q^{\langle\be,\mu\rangle}[\cO_{\Fl(xt_\be)}(\mu)]$. We check that they are equal after applying $\Psi$. We have
%\begin{align*}
%\Psi([\cO_{\Fl(x)}(\mu)]\cdot q^\be)(\nu) &= \sum_{\la\in P} q^{\langle\be,\la\rangle}c_\la\Psi([\cO_{\Fl(t_\be)}(\la)])(\nu)\\
%&= \sum_{\la\in P} q^{\langle\be,\la\rangle}q^{-\langle\be,\la+\nu\rangle}c_\la\Psi([\cO_{\Fl}(\la)])(\nu)\\ 
%&= \sum_{\la\in P} q^{-\langle\be,\nu\rangle}c_\la\Psi([\cO_{\Fl}(\la)])(\nu)\\ 
%&= q^{-\langle\be,\nu\rangle}\Psi([\cO_{\Fl(x)}(\mu)])(\nu)
%\end{align*}
%and
%\begin{align*}
%\Psi(q^{\langle\be,\mu\rangle}[\cO_{\Fl(xt_\be)}(\mu)])(\nu) &= q^{\langle\be,\mu\rangle}\Psi([\cO_{\Fl(xt_\be)}(\mu)])(\nu)\\
%&= q^{\langle\be,\mu\rangle}q^{-\langle\be,\mu+\nu\rangle}\Psi([\cO_{\Fl(x)}(\mu)])(\nu)\\
%&= q^{-\langle\be,\nu\rangle}\Psi([\cO_{\Fl(x)}(\mu)])(\nu).
%\end{align*}

\subsection{nil-DAHA}\label{SS:nil-DAHA}
The nil-DAHA $\HH_0$ is the ring defined by generators
\begin{align}
T_i \ (i\in I_\af),\quad X^{\nu} \ (\nu\in P),\quad X^{\pm\delta}
\end{align}
and relations
\begin{align}
&T_i T_j\dotsm = T_j T_i\dotsm \qquad\text{($m_{ij}=|s_is_j|$ factors on both sides)}\\
&T_i(T_i+1) = 0\\
&X^0=1\\
&\text{$X^\delta X^{-\delta}=1$, \ $X^\delta$ central}\\
&X^{\nu}X^{\mu}=X^{\nu+\mu}\\
&T_i X^{\nu} = X^{s_i(\nu)}T_i-\frac{X^{\nu}-X^{s_i(\nu)}}{1-X^{\al_i}}. 
\end{align}
We set $D_i=1+T_i \ (i\in I_\af)$. These elements satisfy the braid relations and $D_i^2=D_i$.

The polynomial representation $\Z[P][q^{\pm 1}]$ of $\HH_0$ is given multiplication operators $X^{\nu+k\delta}\mapsto q^{-k}e^{-\nu}$ and Demazure operators
\begin{align}
D_i &\mapsto (1-e^{\alpha_i})^{-1}(1-e^{\alpha_i}s_i)\\
T_i &\mapsto -(1-e^{-\al_i})^{-1}(1-s_i) 
\end{align}
In this representation we have:
\begin{align}\label{E:leibniz1}
D_i(fg)&= D_i(f)g + s_i(f) T_i(g)\\
\label{E:leibniz2}
D_i(fg)&= e^{-\al_i}T_i(f)g + s_i(f)D_i(g).
\end{align}

\newcommand{\cOb}{\overline{\cO}}
\newcommand{\beb}{\overline{\be}}

Let $\cOb_{wy^\be}(\la)=\cO_{wy^{\beb}}(\la)$ where $\beb=-w_0(\be)$.

By \cite[Prop. 6.4]{KNS} (attributed to unpublished work of Braverman and Finkelberg), $\KFlrat$ is a left $\HH_0$-module such that
\begin{align}
X^{\nu+k\delta}\cdot[\cOb_{y^\al}(\lambda)] &= q^{-k}e^{-\nu}[\cOb_{y^\al}(\lambda)]\\
D_i\cdot (e^\ga[\cOb_{y^\al}(\lambda)]) &= \frac{e^\ga-e^{\al_i}e^{s_i(\ga)}}{1-e^{\al_i}}[\cOb_{y^\al}(\lambda)] \qquad (i\neq 0)\\
D_0\cdot (e^\ga[\cOb_{y^\al}(\lambda)]) &= \frac{e^{\gamma}-e^{s_0(\gamma)}}{1-e^{\alpha_0}}[\cOb_{y^\al}(\lambda)]+e^{s_0(\gamma)}[\cOb_{s_0y^\al}(\lambda)]
\end{align}
By comparison of these formulas with the Heisenberg action, we observe that $\KFlrat$ is an $(\HH_0,\heis)$-bimodule.

The nil-DAHA $\HH_0$ acts on $\Fun^\rat$ and $\bFun^\rat$ via its $P$-pointwise action in the polynomial representation.

\begin{lem}\label{L:KF-HH}
The map $\Psi^\rat : \KFlrat \to \bFun^\rat$ is an $\HH_0$-homomorphism, making it a monomorphism of $(\HH_0,\heis)$-bimodules.
\end{lem}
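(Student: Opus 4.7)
The plan is to reduce the statement to checking that $\Psi^\rat$ intertwines the two $\HH_0$-actions on a generating set of $\KFlrat$, and then to conclude the bimodule monomorphism from the previous Proposition. The nil-DAHA $\HH_0$ is generated by $X^{\nu+k\delta}$ and $D_i$ ($i\in I_\af$), and its action on $\KFlrat$ is defined explicitly on classes $e^\gamma[\cOb_{y^\al}(\la)]$ for $\al\in Q^\vee_+$ and $\gamma,\la\in P$. Since $\Psi^\rat$ is $\heis$-linear by the previous Proposition, compatibility need only be checked on these generating classes.

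For $X^{\nu+k\delta}$, both the geometric action on $\KFlrat$ and the pointwise polynomial-representation action on $\bFun^\rat$ are multiplication by the scalar $q^{-k}e^{-\nu}$, so compatibility is immediate from \eqref{E:Psi-extend}. For $D_i$ with $i\ne 0$, the geometric formula simplifies to $D_i\cdot(e^\gamma[\cOb_{y^\al}(\la)])=D_i(e^\gamma)[\cOb_{y^\al}(\la)]$ with $D_i(e^\gamma)$ computed in the polynomial representation. Applying $\Psi^\rat$, evaluating at $\mu\in P$, and using the Leibniz rule \eqref{E:leibniz1}, the required identity reduces to $T_i(\psi_\al(\mu))=0$, where $\psi_\al(\mu)=\ch H^0(\Fl(y^{-w_0\al}),\cO(\la+\mu))$. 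By \eqref{E:glob-sec-trans}, $\psi_\al(\mu)=q^{\langle w_0\al,\la+\mu\rangle}\ch H^0(\Fl,\cO(\la+\mu))$; the second factor is $W$-invariant because $\Fl$ is $G(\cR)$-stable, and the $q$-power is fixed by $s_i$ ($i\ne 0$), yielding the desired invariance.

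The case $D_0$ is the main obstacle. Its geometric formula includes the additional term $e^{s_0\gamma}[\cOb_{s_0y^\al}(\la)]$, and after applying $\Psi^\rat$ and using $D_0=1+T_0$, the compatibility reduces to a character identity of the form
\[
\ch H^0(\Fl(\tw'),\cO(\la+\mu)) = D_0\cdot\ch H^0(\Fl(y^{-w_0\al}),\cO(\la+\mu))
\]
for the appropriate $s_0$-translate $\tw'$. I would establish this either via the codimension-one Schubert inclusion relating $\Fl(y^{-w_0\al})$ and its $s_0$-image---with the associated pushforward of line bundles producing the Demazure operator on characters---or by invoking a semi-infinite analog of Demazure's character theorem. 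Once $\HH_0$-equivariance is established, the bimodule monomorphism conclusion follows: injectivity is inherited from the previous Proposition, and the $\HH_0$- and $\heis$-actions commute on $\KFlrat$ by the bimodule observation in the text and on $\bFun^\rat$ because $\heis$ acts by $\mu$-shifts and $q$-scalings while $\HH_0$ acts pointwise via $\Z[q^{\pm 1}]$-linear operators.
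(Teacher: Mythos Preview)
Your proposal is correct and follows essentially the same route as the paper: reduce via $\heis$-linearity to checking $X^\nu$ and $D_i$ on the classes $e^\gamma[\cOb_{y^\al}(\la)]$, handle $X^\nu$ by scalar linearity, handle $D_i$ for $i\ne 0$ via the Leibniz rule \eqref{E:leibniz1} together with $W$-invariance of $\ch\,H^0(\Fl(y^\al),\cO(\la))$, and for $D_0$ invoke the semi-infinite Demazure character formula. The paper makes two simplifications you may want to adopt: it uses the $\heis$-action to reduce to $\al=0$ (so one works only with $e^\gamma[\cO(\la)]$), and for the $D_0$ step it cites the needed identity directly as \cite[Theorem~A(3)]{K1} (recorded here as \eqref{E:D-H}) and then applies the second Leibniz rule \eqref{E:leibniz2}.
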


\begin{proof}(cf. \cite[Proof of Prop. 6.4]{KNS})
Since $\Psi^\rat$ is $\Z[P]((q^{-1}))$-linear, it respects the action $X^\nu \ (\nu\in P)$.

Using the commuting $\heis$-action, it suffices to check the assertion for $D_i \ (i\in I_\af)$ on $e^\ga[\cO(\la)]$. (Note that the $D_i$ are only $\Z((q^{-1}))$-linear.)

One has $T_i\cdot\ch\,H^0(\Fl(y^\al),\cO(\la))=0$ for all $i\neq 0$, $\al\in Q^\vee$, and $\la\in P$, because $H^0(\Fl(y^\al),\cO(\la))$ is a $G$-module. Hence using \eqref{E:leibniz1} we see for $i\neq 0$ that
\begin{align*}
D_i\cdot \Psi^\rat(e^\ga[\cO(\la)])(\mu) &= D_i\cdot (e^{\gamma}\,\gch\,H^0(\Fl,\cO(\lambda+\mu))\\
&=D_i(e^{\gamma})\,\gch\,H^0(\Fl,\cO(\lambda+\mu))+0\\
&=\Psi^\rat(D_i\cdot (e^\ga[\cO(\lambda)]))(\mu)
\end{align*}
for all $\mu\in P$.

More generally, we have by \cite[Theorem A(3)]{K1} that
\begin{align}\label{E:D-H}
D_i\,\ch\,H^0(\cOb_{\tw}(\la))&=
\begin{cases}
\ch\,H^0(\cOb_{s_i\tw}(\la)) & \text{if $s_i \tw\lsi \tw$}\\
\ch\,H^0(\cOb_{\tw}(\lambda)) & \text{if $\tw\lsi s_i\tw$}
\end{cases}
\end{align}
for any $\tw\in W_\af$, $i\in I_\af$, and $\la\in P$.
We have $e\lsi s_i$ for $i\neq 0$ (which gives another way to see the above) and $s_0 \lsi e$. 
%Hence $\gch\,V^-_e(\lambda)$ lies in the image of $D_i$ for $i\neq 0$ and $D_0\,\gch\,V^-_e(\lambda)=\gch\,V^-_{s_0}(\lambda)$. We deduce that $V^-_e(\lambda)$ is $W$-invariant, i.e., annihilated by $T_i$ for $i\neq 0$.

For $i=0$ we therefore have
\begin{align*}
&\Psi^\rat(D_0\cdot (e^{\gamma}[\cO(\lambda)]))(\mu)\\
&\quad=\frac{e^{\ga}-e^{s_0(\ga)}}{1-e^{\al_0}}\,\gch\,H^0(\Fl,\cO(\lambda+\mu))+e^{s_0(\gamma)}\,\gch\,H^0(\cOb_{s_0}(\la+\mu))\\
&\quad=e^{-\al_0}T_0(e^\ga)\,\gch\,H^0(\Fl,\cO(\lambda+\mu))+e^{s_0(\gamma)}D_0(\gch\,H^0(\Fl,\cO(\lambda+\mu)))\\
&\quad=D_0(e^\ga\,\ch\,H^0(\Fl,\cO(\la+\mu)))\\
&\quad=D_0\cdot\Psi^\rat(e^\ga[\cO(\lambda)])(\mu)
\end{align*}
by \eqref{E:leibniz2}.
\end{proof}

Lemma~\ref{L:KF-HH} and \eqref{E:D-H} together give
\begin{align}\label{E:D-act}
D_i\cdot[\cOb_{\tw}(\la)] 
&=
\begin{cases}
[\cOb_{s_i\tw}(\la)] &\text{if $s_i\tw\lsi \tw$}\\
[\cOb_{\tw}(\la)] &\text{if $\tw\lsi s_i\tw$}.
\end{cases}
\end{align}
for all $\tw\in W_{\af}$ and $\la\in P$.

Below we will show (see Theorem~\ref{T:main}):
\begin{align}
\label{E:Kfin-stable}
\text{$\Kfin$ is stable under $\HH_0$.}
% and thus an $(\HH_0,\heis)$-submodule of $\KFlrat$.}
\end{align}
It is of course immediate from \eqref{E:D-act} that $\Kfin$ is stable under $D_i \ (i\in I_\af)$. The main content of \eqref{E:Kfin-stable} is therefore that $\Kfin$ is stable under $X^\nu \ (\nu\in P)$, which is not immediate.

Granting \eqref{E:Kfin-stable} for now, we have that $\Kfin$ is an $(\HH_0,\heis)$-bimodule which is free as a right $\heis$-module with basis $\{[\cO_w]\}_{w\in W}$. Hence for any $H\in \HH$ there exists a unique $W\times W$ matrix $A_H$ with entries in $\heis$ such that
\begin{align}
H\cdot [\cO_{w}] &= \sum_{v\in W} [\cO_{v}]\cdot (A_H)_{vw}
\end{align}
for all $w\in W$. 
%These matrices satisfy
%\begin{align}
%A_{H_1H_2} &= A_{H_1}A_{H_2}
%\end{align}
We obtain an algebra homomorphism 
\begin{align}\label{E:rho0}
\varrho_0 : \HH_0\to\Mat_{W\times W}(\heis)
\end{align}
given by $\varrho_0(H)=A_H$.

Our goal in the next sections is to give an algebraic construction of the homomorphism $\varrho_0$.

The matrices $\varrho_0(D_i)$ can be directly computed from \eqref{E:D-act}. For $i\neq 0$ we have $s_iw \prec w$ if and only if $s_iw<w$ in the usual Bruhat order on $W$.
% if and only if $w^{-1}(\al_i)<0$.
For $i=0$ we have $s_0w\prec w$ if and only if $w^{-1}(-\theta)<0$ if and only if $w<s_\theta w$ in the usual Bruhat order on $W$. Hence
\begin{align}\label{E:Dnot0}
\varrho_0(D_i)_{vw} &= 
\begin{cases}
1 & \text{if $v=s_iw<w$ or $v=w<s_iw$}\\
0 & \text{otherwise}
\end{cases}
\qquad (i\neq 0)\\
\label{E:D0}
\varrho_0(D_0)_{vw} &= 
\begin{cases}
y^{-w_0v^{-1}(\theta)} & \text{if $v=s_\theta w>w$}\\
1 & \text{if $v=w>s_\theta w$}\\
0 & \text{otherwise.}
\end{cases}
\end{align}

\subsection{Example: $G=SL(2)$}

Let $W=\{e,s\}$ be the Weyl group, $\al$ the simple root, and $\fund$ the fundamental weight, with respect to the standard upper triangular Borel subgroup and diagonal torus.

Let $\C^2=V(\fund)$ be the standard representation of $SL(2)$. In this case there are no Pl\"{u}cker equations and we have $\Fl=\P(\C^2[[z]])$. Let $x_k,y_k\in\C^2[z]^*$ be the homogeneous coordinate functions corresponding to $(z^k,0)$ and $(0,z^k)$ in $\C^2[[z]]$, respectively.

The semi-infinite Schubert varieties are given by the following equations
\begin{align*}
\Fl(y^{k\al^\vee}):\ &y_1=x_1=\dotsm=y_{k-1}=x_{k-1}=0\\
\Fl(sy^{k\al^\vee}):\ &y_1=x_1=\dotsm=y_{k-1}=x_{k-1}=y_k=0.
\end{align*}
Hence we have the standard exact sequence
\begin{align*}
0 \to e^\fund \otimes \cO(-\fund) \to \cO_e \to \cO_s \to 0
\end{align*}
Tensoring by $\cO(\fund)$ gives:
\begin{align*}
X^{-\fund} \cdot [\cO_e] &= [\cO_e(\fund)]-[\cO_s(\fund)]\\
&=[\cO_e]\cdot x^\fund-[\cO_s]\cdot x^\fund
\end{align*}
%\begin{align*}
%0 \to e^{-\fund} \otimes \cO_{\Fl(s)}(-\fund) \to \cO_{\Fl(s)} \to \cO_{\Fl(t_{\al^\vee})} \to 0
%\end{align*}
By a similar exact sequence, we obtain
\begin{align*}
X^{\fund} \cdot [\cO_{\Fl(s)}(-\fund)] &= [\cO_{\Fl(s)}]-[\cO_{\Fl(y^{\al^\vee})}]
\end{align*}
and hence
\begin{align*}
X^{-\fund} \cdot [\cO_{\Fl(s)}] &= [\cO_{\Fl(s)}(-\fund)] + X^{-\fund}\cdot[\cO_{\Fl(y^{\al^\vee})}]\\
&=[\cO_s]\cdot x^{-\fund}+X^{-\fund}\cdot([\cO_e]\cdot y^{\al^\vee})\\
&=[\cO_s]\cdot x^{-\fund}+(X^{-\fund}\cdot[\cO_e])\cdot y^{\al^\vee}\\
&=[\cO_s]\cdot x^{-\fund}+([\cO_e]\cdot x^\fund-[\cO_s]\cdot x^\fund)\cdot y^{\al^\vee}.
\end{align*}

Thus we find that
\begin{align}\label{E:X-om}
\varrho_0(X^{-\fund}) &= 
\begin{bmatrix}
x^\fund & x^\fund y^{\al^\vee} \\
-x^\fund & x^{-\fund}-x^\fund y^{\al^\vee}
\end{bmatrix}
\end{align}
which has inverse
\begin{align}\label{E:Xom}
\varrho_0(X^{\fund}) &= 
\begin{bmatrix}
x^{-\fund}-y^{\al^\vee} x^\fund & -y^{\al^\vee}x^\fund  \\
x^\fund & x^\fund
\end{bmatrix}.
\end{align}

\subsection{Dual versions}

To prepare for the algebraic construction of $\varrho_0$, it is convenient to apply a dual twist to all preceding constructions. Let $* : \Z[P]((q^{-1})) \to \Z[P]((q))$ denote the dual map on characters given by $(e^\nu)^*=e^{-\nu}$, $q^*=q^{-1}$. 

Correspondingly, let $\Fun^{\rat*}$ be the $\Z[P]((q))$-module of all functions $\psi:P\to\Z[P]((q))$. Extend the definition of $*$ $P$-pointwise to $*:\Fun^\rat\to\Fun^{\rat*}$. Define $\bFun^{\rat*}$ as the quotient of $\Fun^{\rat*}$ by functions vanishing on sufficiently dominant weights. We have an induced map $*:\bFun^{\rat}\to\bFun^{\rat*}$. 

We also denote the inverse of any of these maps by $*$.

We make $\bFun^{\rat*}$ into a right $\heis$-module as follows:
\begin{align}
(\psi\cdot x^\la)(\mu) &= \psi(\mu-\la)\\
(\psi\cdot y^\be)(\mu) &= q^{\langle\be,\mu\rangle}\psi(\mu)
\end{align}
Then $*$ is compatible with the involutive ring automorphism of $\heis$, also denoted $*$, which is given by: $q^*=q^{-1}$, $(x^\la)^*=x^{-\la}$, $(y^{\be})^*=y^\be$. Then:
\begin{align}
(\psi\cdot h)^* &= \psi^*\cdot h^*
\end{align}
for all $\psi\in \bFun^\rat$ and $h\in \heis$.

We make $\bFun^{\rat*}$ into a left $\HH_0$-module by the $P$-pointwise action of the following operators on $\Z[P]((q))$:
\begin{align}\label{E:HH0-pol-star}
X^\delta \mapsto q,\
X^\nu \mapsto e^\nu,\ T_i &\mapsto -(1-e^{\al_i})^{-1}(1-s_i),\\
D_i &\mapsto (1-e^{-\al_i})^{-1}(1-e^{-\al_i}s_i).\notag
\end{align}
Then $*$ is a left $\HH_0$-module homomorphism.

%Define
%\begin{align}\label{E:rho*}
%\varrho_0^*=*\circ\varrho_0
%\end{align}
Observe from \eqref{E:Dnot0} and \eqref{E:D0} that $\varrho_0(D_i)^*=\varrho_0(D_i)$ for all $i\in I_\af$, where $*$ is applied entrywise matrices in $\Mat_W(\heis)$.

\section{DAHA}\label{S:DAHA}

We assume for the rest of this paper that
\begin{align}
\text{$G$ is simply-laced.}
\end{align}
We will at times identify $Q$ with $Q^\vee$ and $P$ with $P^\vee$ by imposing $\al=\al^\vee$ for $\al\in R$. We extend the canonical pairing to $\langle\,,\,\rangle : P\times P \to\Q$. We choose $e\ge 1$ minimal so that $e\langle P,P\rangle\subset 2\Z$. We have $\langle \al,\al\rangle=2$ for all $\al\in R$ and $\langle\be,\be\rangle\in 2\Z$ for all $\be\in Q$.

The constructions of this section and the next can be applied more generally in the setting of twisted affine root data. We have opted for less generality in an effort to keep our notation relatively simple. Moreover, the overlap with the preceding constructions---which were based on an untwisted affine root system---is precisely the case simply-laced root data.

Further details on the constructions of this section in the twisted case can be found in \cite{CO}.

\subsection{DAHA}

%We will define two versions of the DAHA, $\HH$ and $\HH^\vee$, together with a $\K$-linear anti-isomorphism between these algebras. Experts will recognize that these algebras can be realized inside a single, slightly larger DAHA as images of each other under an anti-automorphism. However, this structure does not survive in the nil-limit, which is our reason for distinguishing between the two algebras here.

Let $\HH$ be the $\Z[t^{\pm 1/2}]$-algebra with generators 
\begin{align}
T_i \ (i\in I_\af),\quad X^\nu \ (\nu\in P),\quad X^{\pm \delta/e}
\end{align}
and relations
\begin{align}
&T_i T_j\dotsm = T_j T_i \dotsm  \qquad\text{($m_{ij}=|s_is_j|$ factors on both sides)}\\
&(T_i-t)(T_i+1) = 0\\
&X^{\delta/e}X^{-\delta/e}=1,\ \text{$X^{\delta/e}$ central}\\
&X^\nu X^\mu=X^{\nu+\mu}\\
&X^0=1\\
&T_i X^\nu - X^{s_i(\nu)} T_i = (t-1)(1-X^{\al_i})^{-1}(X^\nu-X^{s_i(\nu)})\label{E:X-bernstein}
\end{align}
for all $i,j\in I_\af$ and $\nu,\mu\in P$.
%, where we set 
%\begin{align}
%X^{\nu+k\delta}=q^kX^\nu.\label{E:X-delta}
%\end{align}

We observe that $tT_i^{-1} = T_i - (t-1).$

The group $\Pi=P/Q$ acts by $\Z[t^{\pm 1/2}]$-algebra automorphisms on $\HH$. These are given by $\pi(T_i)=T_j$ where $\pi(\al_i)=\al_j$ and $\pi(X^\nu)=X^{\pi(\nu)}$, $\pi(X^{\delta/e})=X^{\delta/e}$. We will call the smash product $\hHH=\HH*\Pi$ the extended DAHA.
% is the $\k$-algebra with generators $T_i$ ($i\in I_\af$) and $X^\nu$ ($\nu\in P$) and relations

If $\nu\in P_+$, we define $Y^\nu=t^{-l/2}\pi T_{i_1}\dotsm T_{i_l}$ for any reduced expression $\nu=\pi s_{i_1}\dotsc s_{i_l}$ in the extended affine Weyl group $W_\ext$. For $\nu\in P$, we write $\nu=\nu_1-\nu_2$ where $\nu_1,\nu_2\in P_+$ and define $Y^\nu=Y^{\nu_1}(Y^{\nu_2})^{-1}$. This is independent of the choice of $\nu_1,\nu_2$. The $Y^\nu$ satisfy $Y^\nu Y^\mu=Y^{\nu+\mu}$ and $Y^0=1$. For any reduced expression $\nu=\pi s_{i_1}\dotsc s_{i_l}$ we have $Y^\nu=t^{-\sum \epsilon_k/2}\pi T_{i_1}^{\epsilon_1}\dotsm T_{i_l}^{\epsilon_l}$ where
\begin{align*}
\epsilon_k = \begin{cases} +1 &\text{if $\pi s_{i_1}\dotsm s_{i_{k-1}}(\al_{i_k})\prec 0$}\\-1 &\text{if $\pi s_{i_1}\dotsm s_{i_{k-1}}(\al_{i_k})\succ 0$}.\end{cases}
\end{align*}

%$\hHH$ is generated by
%\begin{align}
%T_i \ (i\neq 0),\  X^\nu\ (\nu\in P),\  and Y^\nu\ (\nu\in P)$.
%\end{align}

We also define $Y^{\delta/e}=X^{-\delta/e}$.

\subsection{Duality}

\newcommand{\hvarphi}{\widehat{\varphi}}

The extended DAHA $\hHH$ carries, among other symmetries, a $\Z[t^{\pm 1/2}]$-linear  algebra involutive anti-automorphism $\hvarphi$ which is uniquely determined by $\hvarphi(X^{\delta/e})=X^{\delta/e}=Y^{-\delta/e}$, $\hvarphi(X^\nu)=Y^{-\nu}$, and $\hvarphi(T_i)=T_i$ for all $i\in I$ (see \cite{CO} or \cite{C1} and the references therein).

We will make use of the algebra automorphism $\tau_+$ of $\hHH$ \cite{C1}, which fixes $T_i\ (i\neq 0)$, $X^{\delta/e}$, and $X^\nu\ (\nu\in P)$ and is given on the remaining generators by
\begin{align}
\tau_+ : T_0\mapsto X^{-\al_0}tT_0^{-1},\
\pi_r \mapsto X^{\fund_r-\frac{1}{2}\langle\fund_r,\fund_r\rangle\delta}\pi_r.
\end{align}

Define $\varphi=\hvarphi\circ\tau_+$, which is an anti-automorphism of $\hHH$. One checks that $\hvarphi(\tau_+(T_0))=\tau_+(T_0)=X^{-\al_0}tT_0^{-1}$ and hence
\begin{align}
\varphi : X^\nu \mapsto Y^{-\nu} \ (\nu\in P),\ T_i\mapsto T_i\ (i\neq 0),\ T_0\mapsto X^{-\al_0}tT_0^{-1}.
\end{align}

% \cite{CO}.

Let $\HH'=\varphi(\HH)$. Thus $\HH'$ is the $\Z[t^{\pm 1/2}]$-subalgebra of $\HH$ generated by $T_i'$ ($i\in I_\af$) and $Y^\nu$ ($\nu\in P$) and $Y^{\delta/e}=X^{-\delta/e}$, where $T_i'=T_i$ for $i\in I$ and 
$T_0'=X^{-\al_0}tT_0^{-1}$. % = \varphi(\tau_+(T_0)), rescaled
% \tau_+ will survive as an automorphism of \HH_0 in the limit
%$T_0^\vee=t^{\ell(t_\theta)/2}(X^\theta T_{s_\theta})^{-1}$
We deduce that $\HH'$ can be presented as the $\Z[t^{\pm 1/2}]$-algebra with generators $T_i'$ ($i\in I_\af$) and $Y^\nu$ ($\nu\in P$) and $Y^{\delta/e}$ and relations
\begin{align}
&T_i' T_j'\dotsm = T_j' T_i' \dotsm  \qquad\text{($m_{ij}=|s_is_j|$ factors on both sides)}\\
&(T_i'-t)(T_i'+1) = 0\\
&Y^{\delta/e}Y^{-\delta/e}=1,\ \text{$Y^{\delta/e}$ central}\\
&Y^\nu Y^\mu=Y^{\nu+\mu}\\
&Y^0=1\\
&T_i' Y^\nu - Y^{s_i(\nu)} T_i' = (t-1)(1-Y^{-\al_i})^{-1}(Y^\nu-Y^{s_i(\nu)})\label{E:Y-bernstein}
\end{align}
for all $i,j\in I_\af$ and $\nu,\mu\in P$.
%\begin{align}
%Y^{\nu+k\delta}=q^{-k}Y^\nu\label{E:Y-delta}
%\end{align}

%On the level of generators and relations, the only significant difference between $\HH$ and $\HH^\vee$ is in the Bernstein relations \eqref{E:X-bernstein}, \eqref{E:Y-bernstein} and the definitions \eqref{E:X-delta}, \eqref{E:Y-delta}.

%$X^{\al_0} = qX^{-\theta}$, $Y^{\al_0}=q^{-1}X^{-\theta}$, $T_i^\vee=T_i$ for $i\in I$, $T_0^\vee=(X^\theta T_{s_\theta})^{-1}$.

The map $\varphi$ restricts to an anti-isomorphism $\varphi: \HH \to \HH'$ given by $T_i\mapsto T_i'$ ($i\in I_\af$) and $X^\nu\mapsto Y^{-\nu}$ ($\nu\in P$), $X^{\delta/e}\mapsto Y^{-\delta/e}$.

\subsection{Polynomial representation} 
Let $\k=\Q(q^{1/e},t^{1/2})$. The group algebra $\k[P]$, with $\k$-basis $\{x^\la\}_{\la\in P}$, is a left $\hHH$-module such that
\begin{align}
T_i(f)&=ts_i(f) + (t-1)\frac{f-s_i(f)}{1-x^{\alpha_i}} \quad (i\in I_\af)\\
X^\nu(f) &= x^\nu f \quad (\nu\in P)\\
X^{\delta/e}(f) &= q^{1/e}f
\end{align}
and $\Pi$ acts by its group action on $\k[P]$: $\pi(x^\la)=x^{\pi(\la)}$.
%$\pi\in \Pi$ act by affine Weyl group elements, 
Thus a general element $H\in\hHH$ acts on $\K[P]$ by an operator given by a finite sum (a difference-reflection operator):
\begin{align}
H\mapsto \sum_{\tw\in W_\ext}h_{\tw} \tw
\end{align}
where $h_{\tw}\in\k(P)=\mathrm{Frac}(\k[P])$, viewed as a multiplication operator. This gives an embedding 
\begin{align}
\hHH\overset{\pol}{\longhookrightarrow}\k(P)\rtimes W_\ext
\end{align}
whose image leaves $\K[P]$ stable.
%The action of $t_\xi w$ is determined by the level zero action on $P\oplus\Z\delta$:
%\begin{align}
%w e^\la &= e^{w(\la)}\\
%t_\be e^\la &= e^{\lambda-\langle\be,\la\rangle\delta}=q^{-\langle\be,\la\rangle}e^\la.
%\end{align}

\subsection{Functions on $W$}

\newcommand{\kFun}{\k(P)_W}
\newcommand{\End}{\mathrm{End}}
\newcommand{\Diff}{\mathfrak{D}}

Let $\kFun$ be the $\k$-algebra functions $\phi:W \to \K(P)$ under pointwise addition and multiplication. We make $\kFun$ a $W_\ext$-module as follows:
\begin{align}
(u \cdot \phi)(v) &= \phi(u^{-1}v)\\
(y^\eta\cdot \phi)(v) &= y^{v^{-1}(\eta)}\cdot \phi(v)
%F(t_{-\nu}wt_\xi) = F(wt_{\xi-w^{-1}(\nu)}) = t_{w^{-1}(\nu)}\cdot F(wt_\xi).
\end{align}
where $u,v\in W$ and $\mu\in P$. This action is by $\k$-algebra automorphisms, thus making $\kFun$ a module over $\kFun\rtimes W_\ext$ (letting $\k(P)_W$ act on itself by multiplication).

%We record the action of $\kFun*W_\ext$ on $\kFun$ using matrices in 
%$$\Mat_{W\times W}(\K(P)*P)$$
%as follows.

For any $D\in \kFun \rtimes W_\ext$ there exist unique $A_{vw}\in\K(P)\rtimes P \ (v,w\in W)$ such that
\begin{align}
(D\cdot \phi)(v) &= \sum_{v\in W} A_{vw}\phi(w)
\end{align}
for all $\phi\in \kFun$. The assignment $D\mapsto A=(A_{vw})_{v,w\in W}$ is an embedding of $\k$-algebras 
\begin{align}\label{E:Fun-Wext-mat}
\kFun \rtimes W_\ext\longhookrightarrow \Mat_{W}(\k(P)\rtimes P).
\end{align}

%We define a left action of $\hHH$ on $\kFun$.
% such that
%\begin{align}
%f(xt_\nu)=t_{-\nu}\cdot f(x) \qquad\text{for any $x\in W_\af,\nu\in Q^\vee$}.
%\end{align}

%One could make $\kFun$ a $W_\ext$-module by acting pointwise on $\k(P)$. Instead, with respect to the semi-direct product $W_\ext=W\ltimes P$, we let $P$ act on $\k(P)$ and $W$ on the domain of $\phi\in\kFun$. 

%One checks that the matrix of $D_1D_2$ is $A_1 A_2$.

%This defines an action of $W_\af$ on $\Fun$, because
%\begin{align}
%(w t_\xi\cdot F)(x) &= (t_\xi\cdot F)(w^{-1}x) = t_{x^{-1}w(\xi)}\cdot F(w^{-1}x)\\
%(t_{w(\xi)} w\cdot F)(x) &= t_{x^{-1}w(\xi)}\cdot (w\cdot F)(x) = t_{x^{-1}w(\xi)} \cdot F(w^{-1}x).
%\end{align}

\begin{ex}
For $D=\chi\otimes y^\eta u\in\kFun\rtimes W_\ext$, $u,v\in W$, we have
\begin{align*}
((\chi\otimes y^\eta u) \cdot \phi)(v) &=\chi(v) y^{v^{-1}(\eta)}\cdot \phi(u^{-1}v)
\end{align*}
and hence the corresponding matrix is given by
\begin{align*}
A_{vw} &= \begin{cases}\chi(v)y^{v^{-1}(\eta)} &\text{if $w=u^{-1}v$}\\0 &\text{otherwise.}\end{cases}
\end{align*}
\end{ex}

%\begin{rem}
%Bears a resemblance to constructions of Kostant-Kumar (especially in the nil-specialization) but we do not make this precise.
%\end{rem}

\subsection{Matrix realization of $\HH'$}
For $f\in\K(P)$, define $\phi_f : W \to\K(P)$ by $\phi_f(v)=v^{-1}\cdot f$.
The map $f\mapsto \phi_f$ is $W_\ext$-equivariant. Let
\begin{align}\label{E:kP-kPW}
\k(P)\rtimes W_\ext &\longhookrightarrow \k(P)_W\rtimes W_\ext\\
f\otimes y^\eta u &\longmapsto \phi_f \otimes y^\eta u\notag
\end{align}
be the induced map.

%For a $K$-algebra $A$ and a group $G$ acting by linear automorphisms on $A$, we denote by $A*G$ the smash product.

%Now $\hHH$ acts on $\kFun$ as follows. 
%The polynomial representation gives an embedding of $\hHH$ into $\K(P) * W_\ext$. We further embed into $\kFun * W_\ext$ via $f\mapsto \phi_f$ above:

\begin{dfn}\label{D:varrho'}
Let $\varrho':\HH'\hookrightarrow \Mat_{W}(\K(P)\rtimes P)$ be the restriction to $\HH'$ of the following composite map:
\begin{align*}
\hHH \overset{\pol}{\longhookrightarrow} \K(P) \rtimes W_\ext \overset{\eqref{E:kP-kPW}}{\longhookrightarrow} \kFun \rtimes W_\ext \overset{\eqref{E:Fun-Wext-mat}}{\longhookrightarrow} \Mat_{W}(\K(P)\rtimes P).
\end{align*}
\end{dfn}

\begin{ex}
Under the arrows in Definition~\ref{D:varrho'}, $f \otimes y^\eta u\in \k(P)\rtimes W_\ext$ is sent to the matrix $A=(A_{vw})_{v,w\in W}$ given by
\begin{align*}
A_{vw} &= \begin{cases}(v^{-1}\cdot f)y^{v^{-1}(\eta)} &\text{if $w=u^{-1}v$}\\0 &\text{otherwise.}\end{cases}
\end{align*}
\end{ex}

\begin{ex} For any $i\neq 0$ and any $\nu\in P$ we have
\begin{align}
\varrho'(T_i')_{vw} &=
\begin{cases}
\frac{1-t}{x^{v^{-1}(\alpha_i)}-1} & \text{if $v=w$}\\
\frac{tx^{v^{-1}(\alpha_i)}-1}{x^{v^{-1}(\alpha_i)}-1} &\text{if $v=s_iw$}\\
0 & \text{otherwise}
\end{cases}\\
\varrho'(X^{\nu})_{vw} &=
\begin{cases}
x^{v^{-1}(\nu)} & \text{if $v=w$}\\
0 & \text{otherwise.}
\end{cases}
\end{align}
%\begin{align}
%\varrho'(T_i^{-1})_{uv} =
%\begin{cases}
%\frac{1-t^{-1}}{e^{-u^{-1}(\alpha_i)}-1} & \text{if $u=v$}\\
%\frac{e^{u^{-1}(\alpha_i)}-t^{-1}}{e^{u^{-1}(\alpha_i)}-1} &\text{if $u=s_iv$}\\
%0 & \text{otherwise}
%\end{cases}
%\end{align}
\end{ex}

\begin{ex} Let $G=SL(2)$ and label rows and columns of matrices by $(e,s_1)$. We have:
\begin{align}
\varrho'(T_1') &= 
\begin{bmatrix}
\frac{1-t}{x^{\al_1}-1} & \frac{tx^{\al_1}-1}{x^{\al_1}-1}\\
\frac{tx^{-\al_1}-1}{x^{-\al_1}-1} & \frac{1-t}{x^{-\al_1}-1}\\
\end{bmatrix}\\
\varrho'(T_0') &= 
q^{-1}\begin{bmatrix}
x^{\al_1} & 0 \\
0 & x^{-\al_1}
\end{bmatrix}
\begin{bmatrix}
\frac{t-1}{q^{-1}x^{\al_1}-1} & \frac{tqx^{-\al_1}-1}{x^{-\al_1}-1}y^{\al_1} \\
\frac{tqx^{\al_1}-1}{x^{\al_1}-1}y^{-\al_1} & \frac{t-1}{q^{-1}x^{-\al_1}-1}
\end{bmatrix}\\
\varrho'(Y^{\fund_1})&=
t^{-1/2}
\begin{bmatrix}
y^{\fund_1} & 0\\
0 & y^{-\fund_1}
\end{bmatrix}
\begin{bmatrix}
\frac{tx^{-\al_1}-1}{x^{-\al_1}-1} & \frac{t-1}{1-x^{-\al_1}}\\
\frac{t-1}{1-x^{\al_1}} & \frac{tx^{\al_1}-1}{x^{\al_1}-1} \\
\end{bmatrix}
\end{align}
Here we use that $Y^{\fund_1}=t^{-1/2}\pi T_1$, where $\pi=y^{\fund_1}s_1$, so that in the polynomial representation
\begin{align}
Y^{\fund_1} \mapsto t^{-1/2}y^{\fund_1}\left(\frac{tx^{-\al_1}-1}{x^{-\al_1}-1}+\frac{t-1}{1-x^{\al_1}}s_1\right).
\end{align}
\end{ex}

%\begin{align}
%\rho^\vee(T_0^\vee) &= 
%t\begin{bmatrix}
%\frac{1-t^{-1}}{e^{-\al_1}-1} & \frac{e^{\al_1}-t^{-1}}{e^{\al_1}-1}\\
%\frac{e^{-\al_1}-t^{-1}}{e^{-\al_1}-1} & \frac{1-t^{-1}}{e^{\al_1}-1}\end{bmatrix}
%\begin{bmatrix}e^{-\al_1} & 0 \\0 & e^{\al_1} \end{bmatrix}\\
%&= 
%t\begin{bmatrix}
%\frac{1-t^{-1}}{e^{-\al_1}-1}e^{-\al_1} & \frac{e^{\al_1}-t^{-1}}{e^{\al_1}-1}e^{\al_1}\\
%\frac{e^{-\al_1}-t^{-1}}{e^{-\al_1}-1}e^{-\al_1} & \frac{1-t^{-1}}{e^{\al_1}-1}e^{\al_1}\end{bmatrix}
%\end{align}

\section{nil-DAHA}

\subsection{nil-DAHA's}

\newcommand{\lHH}{\widetilde{\HH}}

Let $\lHH$ and $\lHH'$ denote the $\Z[t]$-subalgebras of $\HH$ and $\HH'$ which are generated by $T_i$ ($i\in I_\af$), $X^\nu$ ($\nu\in P$), $X^{\pm\delta}$ and $T_i'$ ($i\in I_\af$), $Y^\nu$ ($\nu\in P$), $Y^{\pm\delta}$, respectively.

The corresponding nil-DAHA's $\HH_0$ and $\HH_0'$ are obtained by specializing $t=0$, i.e., $\HH_0=\lHH/t\lHH$ and $\HH_0'=\lHH'/t\lHH'$. Thus $\HH_0$ admits the presentation of \S\ref{SS:nil-DAHA} and $\HH_0'$ can be presented as follows:

%$\HH_0$ is the ring ($\Z$-algebra) with generators $T_i$ ($i\in I_\af$) and $X^\nu$ ($\nu\in P$) and relations
%\begin{align}
%&T_i T_j\dotsm = T_j T_i \dotsm  \qquad\text{($m_{ij}=|s_is_j|$ factors on both sides)}\\
%&T_i(T_i+1) = 0\\
%&X^\nu X^\mu=X^{\nu+\mu}\\
%&X^0=1\\
%&T_i X^\nu - X^{s_i(\nu)} T_i = -(1-X^{\al_i})^{-1}(X^\nu-X^{s_i(\nu)})\label{E:X-bernstein}
%\end{align}
%for all $i,j\in I_\af$ and $\nu,\mu\in P$, where we set 
%\begin{align}
%X^{\nu+k\delta}=q^kX^\nu.\label{E:X-delta}
%\end{align}

$\HH_0'$ is the ring with generators 
\begin{align}
T_i' \ (i\in I_\af),\quad Y^\nu \ (\nu\in P), \quad Y^{\pm \delta}
\end{align}
and relations
\begin{align}
&T_i' T_j'\dotsm = T_j' T_i' \dotsm  \qquad\text{($m_{ij}=|s_is_j|$ factors on both sides)}\\
&T_i'(T_i'+1) = 0\\
&Y^{\delta}Y^{-\delta}=1,\ \text{$Y^{\delta}$ central}\\
&Y^\nu Y^\mu=Y^{\nu+\mu}\\
&Y^0=1\\
&T_i' Y^\nu - Y^{s_i(\nu)} T_i' = -(1-Y^{-\al_i})^{-1}(Y^\nu-Y^{s_i(\nu)})\label{E:Y-bernstein}
\end{align}
for all $i,j\in I_\af$ and $\nu,\mu\in P$.
%, where we set
%\begin{align}
%Y^{\nu+k\delta}=q^{-k}Y^\nu.\label{E:Y-delta}
%\end{align}

%At $v=0$, the operators $\rho(vT_i), \rho(X^\nu)$ for $i\in I_\af$ and $\nu\in P$ give the polynomial representation of nil-DAHA $\HH_0$ considered above, twisted by $*$.

We set $D_i'=T_i'+1$ for $i\in I_\af$.

We obtain a ring anti-automorphism $\varphi_0:\HH_0\to\HH_0'$ as the specialization of $\varphi$.

%We recall the construction of an $\HH_0'$-module from \cite{CO}.

\subsection{Ruijsenaars-Etingof limit}
Following \cite[\S4.4]{CO}, we define the Ruij\-senaars-Etingof limit
\begin{align}
\varrho_0'(H) &= \lim_{t\to 0} \varkappa(\varrho'(H))
\end{align}
for $H\in\HH'$, when it exists, where $\kappa$ is the automorphism of $\K(P)\rtimes P$ given by
\begin{align}
\varkappa(x^\lambda) &= t^{-\langle\la,\rho\rangle}x^\la\\
\varkappa(y^\mu) &= t^{\langle\mu,\rho\rangle}y^\mu
\end{align}
and acting entrywise on $\Mat_{W}(\K(P)\rtimes P)$.

Let $\heis'=\heis_{Q^\vee,P}=\Z[q^{\pm 1}][Q^\vee]\rtimes P$, with generators $x^\be \ (\be\in Q^\vee)$ and $y^\la \ (\la\in P)$ such that $x^\be y^\la=q^{\langle \be,\mu\rangle}y^\la x^\be$.
%$x^\beta,y^\mu$ such that $y^\mu x^\beta=q^{-\langle\beta,\mu\rangle} x^\beta y^\mu$,

\begin{thm}
For any $H\in\lHH'$, the limit $\varrho_0'(H)$ exists and belongs to $\Mat_{W}(\heis')$, giving a homomorphism $\varrho_0':\HH'_0 \to \Mat_{W\times W}(\heis')$.
\end{thm}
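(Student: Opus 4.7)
Since $\varkappa$ is an automorphism of the coefficient algebra $\K(P)\rtimes P$, the composition $\varkappa\circ\varrho'$ is a $\K$-algebra embedding $\HH' \hookrightarrow \Mat_W(\K(P)\rtimes P)$. The strategy is to show that the image of $\lHH'$ lies in the $\Z[t]$-subalgebra $\Mat_W(\heis'_{[t]})$, where $\heis'_{[t]} := \Z[t][q^{\pm 1}][Q^\vee]\rtimes P$ is the natural $\Z[t]$-form of $\heis'$. Once this is established, the $t=0$ specialization yields a $\Z[t]$-algebra homomorphism $\lHH' \to \Mat_W(\heis')$, which factors through $\HH_0' = \lHH'/t\lHH'$ because the defining relations of $\HH_0'$ are by construction the $t=0$ specializations of the relations in $\lHH'$. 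By $\Z[t]$-algebra closure, the integrality claim reduces to checking it on the generators $T_i'$ ($i\in I_\af$), $Y^\nu$ ($\nu\in P$), and $Y^{\pm\delta}$.

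For $T_i'$ with $i\ne 0$, the explicit $(v,v)$- and $(v,s_iv)$-entries of $\varrho'(T_i')$ become, after applying $\varkappa$ and clearing by $t^{\langle v^{-1}(\al_i),\rho\rangle}$, rational functions of $t$ regular at $t=0$ whose $t=0$ values reproduce the nil Demazure matrix entries. For $T_0' = X^{-\al_0}(T_0 - (t-1))$, the affine reflection $s_0 = s_\theta y^{-\theta^\vee}$ introduces $y$-variables in the off-diagonal entries of $\varrho'(T_0')$ (as illustrated by the $SL(2)$ computation leading to \eqref{E:Xom}); combining the $\varkappa$-rescalings of $x^{-\al_0} = q^{-1}x^\theta$ and $y^{\theta^\vee}$ yields a regular limit. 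The case $Y^{\pm\delta}$ is trivial, since $\varkappa\varrho'(Y^{\pm\delta}) = q^{\mp 1}I$.

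The central case is $Y^\nu$ for $\nu\in P_+$. Choose a reduced expression $y^\nu = \pi s_{i_1}\cdots s_{i_l}$ in $W_\ext$, with $l = \ell(y^\nu) = 2\langle\nu,\rho\rangle$, so that $Y^\nu = t^{-l/2}\pi T_{i_1}\cdots T_{i_l}$. The matrix $\varkappa\varrho'(\pi)$ is a generalized permutation matrix whose nonzero entries take the form $t^{\langle v^{-1}(\eta),\rho\rangle}y^{v^{-1}(\eta)}$, where $\eta\in P$ is the translation part of $\pi$ in $W_\ext$. Multiplying by the matrices $\varkappa\varrho'(T_{i_k})$ and by the scalar $t^{-l/2}$, one verifies by induction on $l$ --- using the Bernstein relation \eqref{E:Y-bernstein} to track how $t$-powers redistribute through products --- that every matrix entry has non-negative $t$-degree, and hence a well-defined limit in $\Mat_W(\heis')$. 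The $SL(2)$ computation of $\varkappa\varrho'(Y^{\fund_1})$ exemplifies the mechanism: the global $t^{-1/2}$ prefactor is absorbed by the $t^{1/2}$ rescaling of $y^{\fund_1}$ on the diagonal of $\varkappa\varrho'(\pi_1)$, while the $t^{-1/2}$ deficit in the off-diagonal entries is compensated by $t$-factors arising from $\varkappa(x^{\al_1})$ inside $\varkappa\varrho'(T_1)$. For general $\nu = \nu_1 - \nu_2\in P$ with $\nu_1,\nu_2\in P_+$, one writes $\varkappa\varrho'(Y^\nu) = \varkappa\varrho'(Y^{\nu_1})\cdot\varkappa\varrho'(Y^{\nu_2})^{-1}$; the inverse lies in $\Mat_W(\heis'_{[t]})$ because $\varrho_0'(Y^{\nu_2})$ is invertible in $\Mat_W(\heis')$, as seen from its block-triangular form with respect to a total order refining the Bruhat order on $W$.

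The main obstacle is the $t$-degree bookkeeping in the $Y^\nu$ step, which constitutes the nonsymmetric matrix analog of the Ruijsenaars--Etingof limit for Macdonald operators established in \cite{CO}. Once integrality is settled on generators, multiplicativity of the limit and the factorization through $\HH_0'$ are formal consequences of the fact that $\varkappa\varrho'$ is a ring homomorphism and that the $t=0$ specialization $\Mat_W(\heis'_{[t]}) \to \Mat_W(\heis')$ is multiplicative.
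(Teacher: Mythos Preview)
The paper's proof is a one-line citation of \cite[Theorem~4.1(i)]{CO}, so your proposal amounts to a sketch of that cited result rather than an alternative approach.

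There is a genuine gap in your framework. You assert that $\varkappa\circ\varrho'$ carries $\lHH'$ into $\Mat_W(\heis'_{[t]})$ with $\heis'_{[t]}=\Z[t][q^{\pm 1}][Q^\vee]\rtimes P$, but this is already false on the generators $T_i'$. The entries of $\varkappa(\varrho'(T_i'))$ are rational functions of the $x$-variables, not Laurent polynomials: in the $SL(2)$ display preceding Example~\ref{EX:Ti}, the $(e,e)$-entry of $\varkappa(\varrho'(T_1'))$ is $\frac{t(1-t)}{x^{\al_1}-t}$, which lies in $\k(P)$ but not in $\Z[t][q^{\pm 1}][Q^\vee]$. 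Its $t\to 0$ limit happens to be $0\in\heis'$, but the element itself is not in your proposed $\Z[t]$-form. Your multiplicativity argument at the end therefore collapses, since the matrices to be multiplied never inhabit $\Mat_W(\heis'_{[t]})$.

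A correct receptacle is the localization of $\Z[t][q^{\pm 1}][Q^\vee]\rtimes P$ at the multiplicative set generated by $\{1-t^k x^{\be}:k\ge 1,\ \be\in R\}$, which admits a well-defined $t=0$ specialization to $\heis'$ (each such factor specializes to $1$). After $\varkappa$, every denominator $1-x^{\al}$ with $\al$ a root becomes, up to a unit in $\Z[t^{\pm 1}][Q^\vee]$, of the form $1-t^{|\langle\al,\rho\rangle|}x^{\pm\al}$ (using $\langle\al,\rho\rangle\neq 0$), so the $T_i'$ do land in this localization. However, for $Y^\nu$ the reduced expression $t^{-l/2}\pi T_{i_1}\cdots T_{i_l}$ involves $T_0$ rather than $T_0'$, and $\varkappa\varrho'(T_0)$ individually carries negative powers of $t$; showing that these cancel in the product is exactly the nontrivial $t$-degree bookkeeping you defer to an unspecified induction, and is the substantive content of \cite[Theorem~4.1(i)]{CO}. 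Your treatment of non-dominant $\nu$ via invertibility of $\varrho_0'(Y^{\nu_2})$ is also circular as written: you need regularity of $\varkappa\varrho'(Y^{\nu_2})^{-1}$ at $t=0$, which does not follow from invertibility of the $t=0$ value without first knowing the entries lie in a ring where such an inference is valid.
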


\begin{proof}
The existence of $\varrho_0'(H)$ for $H\in\lHH'$ is an immediate consequence of \cite[Theorem 4.1(i)]{CO}. It follows readily from the construction that, if $\varrho_0'(H)$ exists, it must belong to $\Mat_{W}(\heis')$.
\end{proof}

\begin{ex} For $G=SL(2)$, we have: %$T_0^\vee=T_1^{-1}X^{-\theta}$

\begin{align}
\notag
\varkappa(\varrho'(T_1')) &= 
\begin{bmatrix}
\frac{1-t}{t^{-1}x^{\al_1}-1} & \frac{x^{\al_1}-1}{t^{-1}x^{\al_1}-1}\\
\frac{t^2x^{-\al_1}-1}{tx^{-\al_1}-1} & \frac{1-t}{tx^{-\al_1}-1}
\end{bmatrix}
\end{align}
\begin{align}
\varrho_0'(T_1')&=
\begin{bmatrix}
0 & 0 \\
1 & -1
\end{bmatrix},\qquad
\varrho_0'(D_1') =
\begin{bmatrix}
1 & 0 \\
1 & 0
\end{bmatrix}
\end{align}

%\begin{align}
%\varkappa(\rho^\vee(T_0^\vee)) &= 
%%
%t\begin{bmatrix}
%\frac{1-t^{-1}}{te^{-\al_1}-1} & \frac{t^{-1}e^{\al_1}-t^{-1}}{t^{-1}e^{\al_1}-1}\\
%\frac{te^{-\al_1}-t^{-1}}{te^{-\al_1}-1} & \frac{1-t^{-1}}{t^{-1}e^{\al_1}-1}\end{bmatrix}
%\begin{bmatrix}te^{-\al_1} & 0 \\0 & t^{-1}e^{\al_1} \end{bmatrix}\\
%&= \begin{bmatrix}
%\frac{1-t^{-1}}{te^{-\al_1}-1}t^2e^{-\al_1} & \frac{t^{-1}e^{\al_1}-t^{-1}}{t^{-1}e^{\al_1}-1}e^{\al_1}\\
%\frac{te^{-\al_1}-t^{-1}}{te^{-\al_1}-1}t^2e^{-\al_1} & \frac{1-t^{-1}}{t^{-1}e^{\al_1}-1}e^{\al_1}\end{bmatrix}\\
%&\to
%\begin{bmatrix}
%0 & e^{\al_1}-1\\
%0 & -1\\
%\end{bmatrix}\\
%\varkappa(\rho^\vee(T_0^\vee+1) &\to
%\begin{bmatrix}
%1 & e^{\al_1}-1\\
%0 & 0\\
%\end{bmatrix}
%\end{align}

\begin{align}
\notag
\varkappa(\varrho'(T_0')) &= 
q^{-1}\begin{bmatrix}
t^{-1}x^{\al_1} & 0 \\
0 & tx^{-\al_1}
\end{bmatrix}
\begin{bmatrix}
\frac{t-1}{q^{-1}t^{-1}x^{\al_1}-1} & \frac{t^2qx^{-\al_1}-1}{tx^{-\al_1}-1}ty^{\al_1} \\
\frac{qx^{\al_1}-1}{t^{-1}x^{\al_1}-1}t^{-1}y^{-\al_1} & \frac{t-1}{q^{-1}tx^{-\al_1}-1}
\end{bmatrix}\\
\notag
&=q^{-1}\begin{bmatrix}
t^{-1}x^{\al_1}\frac{t-1}{q^{-1}t^{-1}x^{\al_1}-1} & t^{-1}x^{\al_1}\frac{t^2qx^{-\al_1}-1}{tx^{-\al_1}-1}ty^{\al_1} \\
tx^{-\al_1}\frac{qx^{\al_1}-1}{t^{-1}x^{\al_1}-1}t^{-1}y^{-\al_1} & tx^{-\al_1}\frac{t-1}{q^{-1}tx^{-\al_1}-1}
\end{bmatrix}
\end{align}
\begin{align}
\varrho_0'(T_0') &=
\begin{bmatrix}
-1 & q^{-1}x^{\al_1}y^{\al_1} \\
0 & 0
\end{bmatrix},\qquad
\varrho_0'(D_0') =
\begin{bmatrix}
0 & q^{-1}x^{\al_1}y^{\al_1} \\
0 & 1
\end{bmatrix}
\end{align}
\begin{align*}
\varkappa(\varrho'(Y^{\fund_1}))&=
\begin{bmatrix}
y^{\fund_1} & 0\\
0 & t^{-1}y^{-\fund_1}
\end{bmatrix}
\begin{bmatrix}
\frac{t^2x^{-\al_1}-1}{tx^{-\al_1}-1} & \frac{t-1}{1-tx^{-\al_1}}\\
\frac{t-1}{1-t^{-1}x^{\al_1}} & \frac{x^{\al_1}-1}{t^{-1}x^{\al_1}-1} \\
\end{bmatrix}
\end{align*}
\begin{align}
\varrho_0'(Y^{\fund_1})&=
\begin{bmatrix}
y^{\fund_1} & -y^{\fund_1}\\
y^{-\fund_1}x^{-\al_1} & y^{-\fund_1}(1-x^{-\al_1}) \\
\end{bmatrix}
%&\mapsto
%\begin{bmatrix}
%y^{-\fund_1} & -y^{-\fund_1}\\
%y^{\fund_1}qx^{\al_1}y^{-\al_1} & y^{\fund_1}(1-qx^{\al_1}y^{-\al_1}) \\
%\end{bmatrix}\\
%&=\begin{bmatrix}
%y^{-\fund_1} & -y^{-\fund_1}\\
%x^{\al_1}y^{\fund_1-\al_1} & y^{\fund_1}-x^{\al_1}y^{\fund_1-\al_1}\\
%\end{bmatrix}
\end{align}
\end{ex}

\begin{ex}\label{EX:Ti}
In general, for $i\neq 0$ we have
%\begin{align}
%\varrho'(T_i')_{vw} =
%\begin{cases}
%\frac{1-t}{x^{v^{-1}(\alpha_i)}-1} & \text{if $v=w$}\\
%\frac{tx^{v^{-1}(\alpha_i)}-1}{x^{v^{-1}(\alpha_i)}-1} &\text{if $v=s_iw$}\\
%0 & \text{otherwise}
%\end{cases}
%\qquad (i\neq 0)
%\end{align}
\begin{align}
\varrho_0'(T_i')_{vv} =
\begin{cases}
0 &\text{if $v^{-1}(\al_i)>0$}\\
-1 &\text{if $v^{-1}(\al_i)<0$}
\end{cases}
\end{align}

\begin{align}
\varrho_0'(T_i')_{v,s_iv} =
\begin{cases}
0 &\text{if $v^{-1}(\al_i)>0$}\\
1 &\text{if $v^{-1}(\al_i)<0$}
\end{cases}
\end{align}
and all other entries of $\varrho_0'(T_i')$ vanish. For $i=0$ we have
%\begin{align}
%\varrho_0'(D_i')_{uu} =
%\begin{cases}
%1 &\text{if $u^{-1}(\al_i)>0$}\\
%0 &\text{if $u^{-1}(\al_i)<0$}
%\end{cases}
%\end{align}
%
%\begin{align}
%\varrho_0'(D_i')_{u,s_iu} =
%\begin{cases}
%0 &\text{if $u^{-1}(\al_i)>0$}\\
%1 &\text{if $u^{-1}(\al_i)<0$}
%\end{cases}
%\end{align}
\begin{align}
\varrho'(T_0')_{vw} &= 
\begin{cases}
q^{-1}x^{v^{-1}(\theta)}\frac{t-1}{q^{-1}x^{v^{-1}(\theta)}-1} & \text{if $v=w$}\\
q^{-1}x^{v^{-1}(\theta)}\frac{tqx^{-v^{-1}(\theta)}-1}{qx^{-v^{-1}(\theta)}-1}y^{v^{-1}(\theta)} & \text{if $v=s_\theta w$}\\
0 & \text{otherwise}
\end{cases}
\end{align}
and hence
\begin{align}
\varrho_0'(T_0')_{vv} &=
\begin{cases}
-1 & \text{if $v^{-1}(\theta)>0$} \\
0  & \text{if $v^{-1}(\theta)<0$}
\end{cases}\\
\varrho_0'(T_0')_{v,s_\theta v} &=
\begin{cases}
q^{-1}x^{v^{-1}(\theta)}y^{v^{-1}(\theta)} & \text{if $v^{-1}(\theta)>0$}\\
0 & \text{if $v^{-1}(\theta)<0$}
\end{cases}
\end{align}
and all other entries of $\varrho_0'(T_0')$ vanish.
\end{ex}

%\begin{align}
%\varrho_0'(D_0')_{uu} &=
%\begin{cases}
%0 & \text{if $u^{-1}(\theta)>0$} \\
%1  & \text{if $u^{-1}(\theta)<0$}
%\end{cases}\\
%\varrho_0'(D_0')_{u,s_\theta u} &=
%\begin{cases}
%q^{-1}e^{u^{-1}(\theta)}t_{u^{-1}(\theta)} & \text{if $u^{-1}(\theta)>0$}\\
%0 & \text{if $u^{-1}(\theta)<0$}
%\end{cases}
%\end{align}

%\begin{align}
%x \lsi s_0x \ (x\in W) &\Leftrightarrow x^{-1}(-\theta)>0
%%\ell_\si(x)<\ell_\si(s_0x)=\ell_\si(s_\theta x t_{-x^{-1}(\theta)})\\
%%&\Leftrightarrow \ell(x)<\ell(s_\theta x)-\langle 2\rho, x^{-1}(\theta)\rangle\\
%%&\Leftrightarrow x^{-1}(\theta) 
%\end{align}

%We see that $\varrho_0=\varrho_0'\circ\varphi_0$ on the nil-AHA.

\section{Main Theorem}

%Let $\varsigma :\heis\to\heis'$ be the $\Z[q^{\pm 1}]$-linear algebra anti-isomorphism defined in \S\ref{SS:q-heis}, applied to $\heis=\heis_{P,Q^\vee}$ and $\heis'=\heis_{Q^\vee,P}$.

We will need the $\Z[q^{\pm 1}]$-linear anti-isomorphism $\tau : \heis' \to \heis$ given by
%\begin{align}
%x^\be\mapsto q^{\frac{\langle \be,\be\rangle}{2}}x^{-\be}y^\be, \quad y^\mu\mapsto y^{-\mu}.
%\end{align}
\begin{align}
\tau:\  x^\be\mapsto q^{\frac{\langle \be,\be\rangle}{2}}x^{-w_0(\be)}y^{w_0(\be)}, \quad y^\mu\mapsto x^{w_0(\mu)}.
\end{align}
We extend the definition of $\tau : \Mat_W(\heis') \to \Mat_W(\heis)$ to an anti-isomorphism of matrix algebras by applying it entrywise and taking the matrix transpose.

For any (left or right) $\heis'$-module $M$, let $M_W$ be the set of all functions $\phi:W\to M$, made into a (left or right) $\Mat_W(\heis')$-module as follows:
\begin{align}
(A\cdot \phi)(v) &= \sum_{w\in W} A_{vw}\phi(w)\\
(\phi\cdot A)(w) &= \sum_{v\in W} \phi(v)A_{vw}
\end{align}
thinking of $\phi$ as a column or row vector, respectively.

\begin{thm}\label{T:main}
\begin{enumerate}
\item $\Kfin$ is an $\HH_0$-module, giving rise to $\varrho_0$ of \eqref{E:rho0}.
\item The following diagram is commutative:
\begin{center}
\begin{tikzcd}
\HH_0' \arrow[r, "\varphi_0^{-1}"] \arrow[d,"\varrho_0'"]
& \HH_0 \arrow[d, "\varrho_0^*"] \\
\Mat_W(\heis') \arrow[r, "\tau"]
& \Mat_W(\heis)
\end{tikzcd}
\end{center}
\end{enumerate}
\end{thm}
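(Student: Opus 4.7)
My plan is to prove (1) and (2) simultaneously by constructing a suitable pairing between the polynomial representation of $\HH$ and the $\heis$-module $\Kfin$, built from the nonsymmetric $q$-Whittaker function of \cite{CO}, and then deducing everything by passing to the Ruijsenaars--Etingof limit $t\to 0$.

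First I would work at the $t$-deformed level, before specializing. Let $\Omega(x;y)$ denote the nonsymmetric $q$-Whittaker function of \cite{CO}, viewed as a formal series with values in some completion of $\k[P]\otimes\heis'$ (or equivalently indexed by $W$ through its Macdonald-type components $\Omega_w$). Its defining property, which I would recall, is that it is the unique eigenfunction of the $Y$-operators acting through the polynomial representation $\pol$ in the $x$-variables, whose spectrum matches multiplication by the $y$-variables; equivalently, $\Omega$ intertwines the left action of $\HH'$ on $\k[P]$ via $\pol$ with the right action of $\HH$ on the Heisenberg side via the duality $\varphi$. I would package this as: for every $H\in\HH$,
\begin{equation}\label{E:nswhitt}
\varrho'(\varphi(H))\cdot\Omega = \Omega\cdot H^{\text{op}}
\end{equation}
where the left action on $\Omega$ is via $\pol$ in the $x$-variables and the right action in the $y$-variables is by the appropriate Heisenberg action derived from $H$ through $\varphi$. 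This is essentially the duality theorem of \cite{CO}.

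Next I would identify, after the $*$-twist and at $t=0$, the right $\heis$-module generated by the Heisenberg components $\Omega_w$ with $\Kfin\otimes\k$ via the functional realization of \S\ref{S:si}. Concretely, under $\Psi^{\rat *}$ each Schubert class $[\cO_w]$ corresponds to a function on $P$ whose values are characters $\ch\,H^0(\Fl(w),\cO(\la+\mu))$ computed by the Demazure character formula; these characters are exactly the specialization at $t=0$ of the components of $\Omega$ evaluated at the $Y$-spectrum, up to the normalization encoded by $\tau$. The anti-isomorphism $\tau:\heis'\to\heis$ is forced on us by the difference between the two Heisenberg actions (right vs. left, plus the exchange $P\leftrightarrow Q^\vee$) and by matching the quadratic shift $q^{\langle\be,\be\rangle/2}$ that appears when rewriting $Y^\be$-eigenfunctions in terms of $x$'s and $y$'s; I would verify this compatibility by a direct check on a basis.

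With \eqref{E:nswhitt} in hand, the bulk of the argument is formal. For any $H\in\lHH$, the identity \eqref{E:nswhitt} applied to $H$ says that acting by $H$ on the Heisenberg side equals a difference-reflection operator coming from $\varrho'(\varphi(H))$ acting in the $x$-variables. Apply $\varkappa$ and take $t\to 0$: the left side becomes the action of $H$ (modulo $t$) rewritten under $*$ and $\tau$, while the right side becomes the nil-DAHA matrix $\varrho_0'(\varphi_0(H))$ acting entrywise. Matching coefficients of the Schubert basis (which, after translating by $\Psi^{\rat*}$, corresponds to matching components of $\Omega$) simultaneously shows: (a) that the resulting action preserves $\Kfin$, since the matrix entries lie in $\heis'$ (and hence, after $\tau$, in $\heis$) by the theorem on $\varrho_0'$; and (b) that the induced matrix representation $\varrho_0^*$ equals $\tau\circ\varrho_0'\circ\varphi_0^{-1}$, which is exactly the commutativity of the diagram. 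For the Demazure generators $D_i$ this can also be checked directly using \eqref{E:D-act} together with Example~\ref{EX:Ti}, which serves as a consistency check.

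The main obstacle is the second step, namely carefully identifying the geometric $*$-dualized Heisenberg module $\Kfin$ with the $t=0$ limit of the Heisenberg module generated by the nonsymmetric $q$-Whittaker function. This requires matching three pieces of data: the Demazure-character formulas for $\ch\,H^0(\Fl(w),\cO(\la+\mu))$ (which is where the $W$-grading on $\Omega$ enters), the Heisenberg action conventions on both sides (which forces $\tau$), and the compatibility with the anti-isomorphism $\varphi_0$ (which dictates why $\varphi_0^{-1}$ appears in the diagram rather than $\varphi_0$). Once this bookkeeping is set up, \eqref{E:nswhitt} and the Ruijsenaars--Etingof limit do the rest.
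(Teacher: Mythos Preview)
Your strategy---use the nonsymmetric $q$-Whittaker function as an intertwiner between the nil-DAHA action on the geometric side and the matrix action $\varrho_0'$, then read off both (1) and (2) from the resulting identity---is exactly the paper's approach. The paper's proof defines $\psi(w)(\mu)=\ch\,H^0(\Fl(w),\cO(\mu))^*$, identifies this via \cite{K1} and \cite{FMO} with characters of global generalized Weyl modules $\W_{w(w_0\mu)}$, and then invokes \cite[Theorem~B]{FMO} directly to obtain the $q$-Toda equation $\tau_+(H)\cdot\tpsi=\varrho_0'(\varphi_0(H))\cdot\tpsi$ at $t=0$. A Gaussian conjugation (by $\ga,\ga'$) removes $\tau_+$, and the resulting identity $H\cdot\psi=\psi\cdot\tau(\varrho_0'(\varphi_0(H)))$, together with the injectivity of $\Psi^\rat$ and the $\heis$-freeness of $\Kfin$, gives both assertions.

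The one point where your plan drifts from this is the proposal to work ``at the $t$-deformed level'' with the Whittaker function and only later apply $\varkappa$ and send $t\to 0$. The nonsymmetric $q$-Whittaker function of \cite{CO} is already the $t\to 0$ object; at generic $t$ the relevant eigenfunction is a (nonsymmetric) spherical function built from $E$-polynomials, and your equation \eqref{E:nswhitt} as written (with $\varrho'$ and $\varphi$ rather than $\varrho_0'$ and $\varphi_0$) does not hold for the Whittaker function. You could salvage this by replacing $\Omega$ with the $t$-generic spherical function and then taking the Ruijsenaars--Etingof limit on both sides, but that just reproves what \cite{FMO} already packages. The paper avoids this detour entirely: it works at $t=0$ from the start, citing \cite{FMO} for the $q$-Toda equation, and the limit $\varrho'\rightsquigarrow\varrho_0'$ is used only in the prior \emph{definition} of $\varrho_0'$, not in the proof of the theorem. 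Also note that the Gaussian conjugation step (which you allude to via the quadratic shift in $\tau$) is where the passage from the $\tau_+$-twisted equation to the untwisted one actually happens; it deserves to be made explicit.
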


\begin{rem}
It is instructive to verify part (2) of Theorem~\ref{T:main} directly on the elements $T_i'$ for $i\in I_\af$, by comparing Example~\ref{EX:Ti} with \eqref{E:Dnot0} and \eqref{E:D0}.
\end{rem}

\begin{proof}
In the space $(\bFun^{\rat*})_W$ consider the following special function $\psi$ whose values $\psi(w)$ are the following elements of $\bFun^{\rat*}$:
\begin{align}\label{E:nswhitt}
\psi(w)=\left(\mu\mapsto \ch\,H^0(\Fl(w),\cO(\mu))^*\right).
\end{align}
By \cite{K1} and \cite{FMO} we know that (see also \cite{KNS,N}):
\begin{align}
H^0(\Fl(w),\cO(\mu))^*
\cong
\begin{cases}
\W_{w(w_0\mu)} & \text{if $\mu\in P^+$}\\
0 & \text{otherwise}
\end{cases}
\end{align}
as $\mathrm{Lie}(\Iw\rtimes\C^\times)$-modules, where $\W_{w(\nu)}$ for $w\in W$ and $\nu\in P_-$ is the global generalized Weyl module of \cite{FMO}. Then \cite[Theorem B]{FMO} asserts that
\newcommand{\tpsi}{\widetilde{\psi}}
\begin{align*}
\tpsi(w)&=(\ga'\ga)^{-1}\sum_{\mu\in P_-}q^{\frac{\langle\mu,\mu\rangle}{2}}x^{-\mu}\ch\,\W_{w(\mu)}\\
&=(\ga'\ga)^{-1}\sum_{\mu\in P_+}q^{\frac{\langle\mu,\mu\rangle}{2}}x^{-w_0\mu}\ch\,\W_{w(w_0\mu)}
%&=(\ga'\ga)^{-1}\sum_{\la\in P}q^{\frac{\langle\la,\la\rangle}{2}}x^{-w_0\la}\psi(w)(\la)%&=(\ga'\ga)^{-1}\sum_{\la\in P}q^{\frac{\langle\la,\la\rangle}{2}}x^{\la}\ch\,H^0(\Fl(w),\cO(\la))^*
\end{align*}
is the nonsymmetric $q$-Whittaker functions, i.e, it satisfies the nonsymmetric $q$-Toda equations:
\begin{align}
\tau_+(H)\cdot\tpsi = \varrho_0'(\varphi_0(H))\cdot\tpsi 
\end{align}
for all $H\in\HH_0$. Here
\begin{align}
\ga'=\sum_{\mu\in P} q^{\frac{\langle\mu,\mu\rangle}{2}}x^\mu,\quad \ga=\sum_{\mu\in P} q^{\frac{\langle\mu,\mu\rangle}{2}}e^\mu
\end{align}
are Gaussians and $\HH_0$ is acting $W$-pointwise and by the polynomial representation \eqref{E:HH0-pol-star} on $\Z[q^{\pm 1}][e^\nu : \nu\in P]$ (which contains $\ch\,\W_{w(\mu)}$). We observe that $\ga^{-1} H\ga=\tau_+(H)$ in this representation.

The nonsymmetric $q$-Toda equations for $\tpsi$ translate to the following equations for $\psi$:
\begin{align}
H\cdot\psi = \psi \cdot \tau(\varrho_0'(\varphi_0(H)))
\end{align}
for all $H\in\HH_0$. Since $\psi(w)=\Psi^\rat([\cO_w])^*$, the injectivity of the $(\HH_0,\heis)$-homomorphism $\Psi^\rat: \KFlrat\to\bFun^\rat$ gives (1). Then the $\heis$-linear independence of $\{\cO_w\}_{w\in W}$ gives (2).
\end{proof}

%   On the other the hand, the $(\HH_0,\heis)$-equivariance of the map implies
%\begin{align}
%H\cdot\psi = \psi \cdot \varrho_0^*(H)
%\end{align}
%for all $H\in\HH_0$. 

%Now if $H\in \HH_0$ and $\varrho_0(H)^*=A=(A_{vw})_{vw}$, then
%\begin{align}
%\tau_+(H)\cdot\psi(v) &= \sum_w A_{vw}\cdot \tpsi(w)\\
%H\cdot\psi(v) &= \sum_w \tpsi(w)\cdot \tau(A_{vw})\\
%H\cdot\psi &= \psi\cdot \tau(A)
%\end{align}
%where $\tau(A)$ is the matrix $\tau(A)_{wv}=\tau(A_{vw})$. Using the injectivity of the map $\Psi$ we deduce that $\tau(A)^*$ satisfies \eqref{} for $H$ acting on $\KFl$.

%\begin{align}
%H^*\cdot\psi = \psi \cdot \tau(\varrho_0'(\varphi_0(H)))
%\end{align}

%\begin{align}
%\ga(x) x^\be \ga(x)^{-1} &= x^\be\\
%\ga(x) y^\la \ga(x)^{-1} &= q^{\frac{\langle\la,\la\rangle}{2}}x^{-\la}y^\la
%\end{align}
%
%\begin{align}
%\sum_{\la\in P} x^{-w_0(\la)}a_\la &\longmapsto (\la\mapsto a_{\la})\\
%x^{\be} &\overset{anti}{\longmapsto} x^{-w_0(\be)}: (\psi\cdot x^\nu)(\la) = \psi(\la-\nu)\\
%y^\mu &\longmapsto y^{w_0(\mu)}: (\psi\cdot y^\mu)(\la) = q^{\langle \mu,\la\rangle}\psi(\la)
%\end{align}
%
%\begin{align}
%q^{\frac{\langle \la,\la\rangle}{2}}\cdot x^{\be} q^{-\frac{\langle \la,\la\rangle}{2}} &=
%q^{\frac{\langle\be,\be\rangle}{2}}q^{-\langle\be,\la\rangle}x^\be\\
%&=q^{\frac{\langle\be,\be\rangle}{2}} x^\be y^{-\be}\\
%%&=q^{-\frac{\langle\be,\be\rangle}{2}}x^\be y^\be\\
%q^{\frac{\langle \la,\la\rangle}{2}}\cdot y^\mu q^{-\frac{\langle \la,\la\rangle}{2}} &= y^\mu
%\end{align}

\subsection{Spherical part}\label{SS:sph}

\newcommand{\Ker}{\mathrm{Ker}}

Define
\begin{align}
\Ksph=\bigcap_{i\neq 0}\mathrm{Ker}\,T_i\subset\Kfin.
\end{align} 
Since $T_i=D_i-1$ and $D_i$ is idempotent, we have $\Ker\,T_i=\Im\,D_i$. Using Lemma~\ref{L:Kfin-basis} and \eqref{E:D-act}, one sees that $\Ksph$ has a $\Z[q^{\pm 1}]$-basis given by the classes $\{\cO_{y^\be}(\la)\}_{\be\in Q^\vee,\la\in P}$. Since $\Fl(y^\be) \ (\be\in Q^\vee)$ are exactly the Schubert varieties in $\Flrat$ which are $G(\cR)$-stable, it is reasonable to regard $\Ksph$ as (part of) the $(G(\cR)\rtimes \C^\times)$-equivariant $K$-theory of $\Flrat$. Observe that $\Ksph$ is generated freely as a right $\heis$-module by $[\cO_{\Fl}]$.

It is well known from the theory of affine Hecke algebras that the subalgebra $\Z[X]^W:=Z[X^{\nu+k\delta} : \nu\in P,k\in\Z]^W \subset \HH_0$ commutes with $T_i$ for $i\neq 0$. Thus $\Z[X]^W$ acts on $\Ksph$ and hence for any $H=f(X)\in\Z[X]^W$ we can write
\begin{align}
f(X)\cdot[\cO_{\Fl}]=[\cO_{\Fl}]\cdot \varrho_0^{\sph}(f(X))
\end{align}
for a unique $\varrho_0^{\sph}(f(X))\in\heis$, giving a homomorphism $\varrho_0^\sph : \Z[X]^W\to\heis$.

Let $\pi_{vw}(A)=A_{vw}$ for any $W\times W$-matrix $A$. We deduce that $\varrho_0^{\sph}(f(X))=\pi_{ee}(\varrho_0(f(X)))$ and $\pi_{we}(\varrho_0(f(X)))=0$ for all $w\neq e$. 

Let $\Z[Y]^W=\Z[Y^{\nu+k\delta} : \nu\in P,k\in\Z]^W\subset\HH_0'$. By Theorem~\ref{T:main}, we obtain:

\begin{cor}\label{C:sph}
%$\varrho_0^{\sph}(f(X))^*=\tau(\varrho_0'(\varphi_0(f(X)))_{ee})$
The following diagram is commutative:
\begin{center}
\begin{tikzcd}
\Z[Y]^W \arrow[r, "\varphi_0^{-1}"] \arrow[d,"\pi_{ee}\circ \varrho_0'"]
& \Z[X]^W \arrow[d, "*\circ\varrho_0^{\sph}"] \\
\heis' \arrow[r, "\tau"]
& \heis
\end{tikzcd}
\end{center}

\end{cor}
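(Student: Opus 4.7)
The plan is to derive the corollary by restricting the matrix-level identity of Theorem~\ref{T:main}(2) from $\HH_0$ and $\HH_0'$ to their respective $W$-invariant subalgebras, and then projecting to the $(e,e)$-entry.

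First I would check that $\varphi_0^{-1}$ restricts to an isomorphism $\Z[Y]^W \xrightarrow{\sim}\Z[X]^W$. From $\varphi(Y^\nu)=X^{-\nu}$ together with the evident $W$-equivariance of $\varphi$ on the generators, a $W$-symmetric polynomial in the $Y^\nu$'s is sent to a $W$-symmetric polynomial in the $X^\nu$'s. Consequently, the left vertical arrow of the corollary's diagram is well-defined on the indicated subalgebras.

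Next I would establish the compatibility $\pi_{ee}\circ\varrho_0 = \varrho_0^{\sph}$ on $\Z[X]^W$. The key point is the Bernstein-type relation in $\HH_0$: if $\nu\in P$ is $s_i$-invariant (for $i\neq 0$), then $T_i X^\nu = X^\nu T_i$, so every element of $\Z[X]^W$ commutes with $T_i$ for $i\neq 0$ and hence preserves $\Ksph\subset\Kfin$. Since $\Ksph$ is free of rank $1$ as a right $\heis$-module on $[\cO_\Fl]=[\cO_e]$, for any $f(X)\in\Z[X]^W$ the element $f(X)\cdot[\cO_e]$ must lie in $[\cO_e]\cdot\heis$. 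Reading off coefficients in the basis $\{[\cO_w]\}_{w\in W}$ of $\Kfin$ forces $\varrho_0(f(X))_{v,e}=0$ for $v\neq e$ and $\varrho_0(f(X))_{e,e}=\varrho_0^{\sph}(f(X))$ by the very definition of $\varrho_0^{\sph}$.

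The conclusion then follows by applying $\pi_{ee}$ to the matrix identity
\begin{align*}
\tau(\varrho_0'(H'))=\varrho_0^*(\varphi_0^{-1}(H')) \qquad (H'\in\Z[Y]^W)
\end{align*}
furnished by Theorem~\ref{T:main}(2). The projection $\pi_{ee}$ commutes with the entrywise map $*$ and also with $\tau$, since the transpose appearing in $\tau$ fixes the $(e,e)$-entry; combining this with the compatibility from the previous step yields
\begin{align*}
\tau(\pi_{ee}(\varrho_0'(H'))) = *(\varrho_0^{\sph}(\varphi_0^{-1}(H'))),
\end{align*}
which is exactly the commutativity in question. I do not anticipate a substantial obstacle, since all the deep content is already contained in Theorem~\ref{T:main}(2); the main care required is in tracking the three involutions $\tau$, $*$, $\varphi_0$ and verifying that each passes correctly through entry extraction and transposition on matrices.
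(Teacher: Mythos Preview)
Your proposal is correct and follows essentially the same route as the paper: the corollary is deduced directly from Theorem~\ref{T:main}(2) by restricting to the $W$-invariant subalgebras and projecting to the $(e,e)$-entry, using the identifications $\varrho_0^{\sph}=\pi_{ee}\circ\varrho_0$ on $\Z[X]^W$ (which the paper records just before the corollary) and the evident compatibility of $\pi_{ee}$ with $*$ and with the transpose in $\tau$. The only minor slip is notational: you write $\varphi(Y^\nu)=X^{-\nu}$, but what you need (and use) is $\varphi_0^{-1}(Y^\nu)=X^{-\nu}$, which follows from $\varphi(X^\nu)=Y^{-\nu}$.
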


\begin{rem}
The map $\pi_{ee}\circ \varrho_0'$ coincides with the $q$-Toda system of difference operators of \cite{C2,CO}.
\end{rem}

\begin{ex}
Recall the matrices $\varrho_0(X^{-\fund})$ and $\varrho_0(X^\fund)$ for $G=SL(2)$ from \eqref{E:X-om} and \eqref{E:Xom}. We have
\begin{align}
\varrho_0(X^{-\fund}+X^\fund) &= 
\begin{bmatrix}
x^{-\fund}+(1-y^{\al^\vee}) x^\fund & x^\fund y^{\al^\vee}-y^{\al^\vee}x^\fund \\
0 & x^{-\fund}+x^\fund (1-y^{\al^\vee})
\end{bmatrix}
\end{align}
and hence
\begin{align}
\varrho_0^\sph(X^{-\fund}+X^\fund) &= x^{-\fund}+(1-y^{\al^\vee}) x^\fund.
\end{align}
\end{ex}

%\section{Examples}
%
%\subsection{$SL(2)$}
%
%\subsection{$SL(3)$}


\begin{thebibliography}{LLMS}

\bibitem[BF1]{BF1}
A. Braverman and M. Finkelberg.
Semi-infinite Schubert varieties and quantum $K$-theory of flag manifolds.
J. Amer. Math. Soc. 27 (2014), no. 4, 1147--1168.

\bibitem[BF2]{BF2}
A. Braverman and M. Finkelberg.
Weyl modules and $q$-Whittaker functions.
Math. Ann. 359 (2014), no. 1-2, 45--59.

\bibitem[C1]{C1}
I. Cherednik.
Double affine Hecke algebras.
London Mathematical Society Lecture Note Series, 319. Cambridge University Press, Cambridge, 2005.

\bibitem[C2]{C2}
I. Cherednik.
Whittaker limits of difference spherical functions.
Int. Math. Res. Not. IMRN 2009, no. 20, 3793--3842.

\bibitem[CO]{CO} 
I. Cherednik and D. Orr.
Nonsymmetric difference Whittaker functions.
Math. Z. 279 (2015), no. 3-4, 879--938.

\bibitem[FF]{FF}
B. Feigin and E. Frenkel.
Affine Kac-Moody algebras and semi-infinite flag manifolds.
Comm. Math. Phys. 128 (1990), no. 1, 161--189.

\bibitem[FMO]{FMO} 
E. Feigin, I. Makedonskyi, and D. Orr.
Generalized Weyl modules and nonsymmetric $q$-Whittaker functions.
Adv. Math. 330 (2018), 997--1033.

\bibitem[FM]{FM}
M. Finkelberg and I. Mirkovic.
Semi-infinite flags. I. Case of global curve $\mathbf{P}^1$. Differential topology, infinite-dimensional Lie algebras, and applications, 81--112,
Amer. Math. Soc. Transl. Ser. 2, 194, Adv. Math. Sci., 44, Amer. Math. Soc., Providence, RI, 1999.

\bibitem[GL]{GL}
A. Givental and Y.-P. Lee.
Quantum $K$-theory on flag manifolds, finite-difference Toda lattices and quantum groups.
Invent. Math. 151 (2003), no. 1, 193--219.

\bibitem[IMT]{IMT}
H. Iritani, T. Milanov, and V. Tonita.
Reconstruction and convergence in quantum K-theory via difference equations. 
Int. Math. Res. Not. IMRN 2015, no. 11, 2887--2937.

\bibitem[INS]{INS}
M. Ishii, S. Naito, and D. Sagaki.
Semi-infinite Lakshmibai-Seshadri path model for level-zero extremal weight modules over quantum affine algebras.
Adv. Math. 290 (2016), 967--1009.

\bibitem[K1]{K1}
S. Kato.
Demazure character formula for semi-infinite flag varieties.
Math. Ann. 371 (2018), no. 3-4, 1769--1801.

\bibitem[K2]{K2}
S. Kato.
Loop structure on equivariant $K$-theory of semi-infinite flag manifolds.
arXiv:1805.01718

\bibitem[KNS]{KNS} 
S. Kato, S. Naito, and D. Sagaki.
Equivariant $K$-theory of semi-infinite flag manifolds and Pieri-Chevalley formula.
arXiv:1702.02408

\bibitem[Ko]{Ko}
P. Koroteev.
A-type Quiver Varieties and ADHM Moduli Spaces.
arXiv:1805.00986

\bibitem[KoZ]{KoZ}
P. Koroteev and A. M. Zeitlin.
qKZ/tRS Duality via Quantum K-Theoretic Counts.
arXiv:1802.04463

\bibitem[LLMS]{LLMS}
T. Lam, C. Li, L. Mihalcea, M. Shimozono.
A conjectural Peterson isomorphism in $K$-theory.
J. Algebra 513 (2018), 326--343.

\bibitem[LNS]{LNS}
C. Lenart, S. Naito, and D. Sagaki.
A Chevalley formula for semi-infinite flag manifolds and quantum $K$-theory (extended abstract).
arXiv:1911.12773

\bibitem[L]{L}
G. Lusztig.
Hecke algebras and Jantzen's generic decomposition patterns.
Adv. in Math. 37 (1980), no. 2, 121--164.

\bibitem[MRY]{MRY}
F. McGlade, A. Ram, and Y. Yang.
Positive level, negative level and level zero.
arXiv:1907.11796

\bibitem[NOS]{NOS} 
S. Naito, D. Orr, and D. Sagaki.
Chevalley formula for anti-dominant weights in the equivariant $K$-theory of semi-infinite flag manifolds.
arXiv:1808.01468

\bibitem[N]{N}
F. Nomoto.
Generalized Weyl modules and Demazure submodules of level-zero extremal weight modules.
arXiv:1701.08377

\end{thebibliography}
\end{document}